\numberwithin{equation}{section}
\newtheorem{theorem}{Theorem}[section]
\newtheorem{definition}{Definition}[section]
\newcommand{\gaussian}[2]
{\genfrac{(}{)}{0pt}{}{#1}{#2}_{\textstyle q}}
\newcommand{\gaussiansqr}[2]
{\genfrac{(}{)}{0pt}{}{#1}{#2}_{\textstyle q^2}}
\newcommand{\gaussiancube}[2]
{\genfrac{(}{)}{0pt}{}{#1}{#2}_{\textstyle q^3}}
\newcommand{\gaussianquart}[2]
{\genfrac{(}{)}{0pt}{}{#1}{#2}_{\textstyle q^4}}
\newcommand{\gaussianpow}[3]
{\genfrac{(}{)}{0pt}{}{#1}{#2}_{\textstyle q^#3}}
\newcommand{\dblsum}[3]
{ \mathop{\sum\sum}_{
   \genfrac{}{}{0pt}{}{\scriptstyle #1=0\ #2=0}
     {\scriptstyle #3}}^{\infty\ \ \infty}
}
\newcommand{\dblsumeven}
{ \mathop{\sum_{k=0}^n\sum_{l=0}^{n-k}}_{
     \scriptstyle k+l \text{~even}}
}
\title{Distinct Partitions and Some q-Binomial Summation Identities}
\author{M.J. Kronenburg}
\date{}
\begin{document}

\maketitle

\begin{abstract}
The partition functions $P(n,m,p)$, the number of integer partitions of $n$ into
exactly $m$ parts with each part at most $p$, and $Q(n,m,p)$, the number of integer
partitons of $n$ into exactly $m$ distinct parts with each part at most $p$,
are related by double summation identities which follow from their generating functions.
From these identities and some identities from an earlier paper,
some other identities involving distinct partitions
and some q-binomial summation identities are proved,
and from these follow some combinatorial identities.
\end{abstract}

\noindent
\textbf{Keywords}: q-binomial coefficient, integer partition function.\\
\textbf{MSC 2010}: 05A17 11B65 11P81

\section{Introduction}

The following q-binomial summation identities are proved,
of which (\ref{result1}) is known as identity (1.8) and (\ref{result5}) as (1.9) in \cite{guo} and both in \cite{merca1},
and (\ref{result2}) is known as corollary 4.1 and (\ref{result6}) as corollary 3.7 in \cite{merca1}.
Corollaries 2.4 and 3.4 of \cite{merca1} follow from (\ref{result8}) and (\ref{result9}).
\begin{equation}
 \sum_{k=0}^n (-1)^k q^{\textstyle\binom{k}{2}} \gaussian{m+n-k}{m}\gaussian{m+1}{k} = \delta_{n,0}
\end{equation}
\begin{equation}\label{result1}
 \sum_{k=0}^{\lfloor n/2\rfloor} q^{\textstyle\binom{n-2k}{2}} \gaussian{m+1}{n-2k} \gaussiansqr{m+k}{m} = \gaussian{m+n}{m}
\end{equation}
\begin{equation}\label{result2}
 \sum_{k=0}^{\lfloor n/2\rfloor} (-1)^k q^{2\textstyle\binom{k}{2}} \gaussian{m+n-2k}{m} \gaussiansqr{m+1}{k} = q^{\textstyle\binom{n}{2}}\gaussian{m+1}{n}
\end{equation}
\begin{equation}\label{result3}
 \sum_{k=0}^{\lfloor n/3\rfloor} (-1)^k q^{\textstyle\binom{n-3k}{2}} \gaussian{m+1}{n-3k} \gaussiancube{m+k}{m} 
  = \sum_{k=0}^n \cos(\frac{2k-n}{3}\pi) \gaussian{m+n-k}{m} \gaussian{m+k}{m} \\
\end{equation}
\begin{equation}\label{result4}
  \sum_{k=0}^{\lfloor n/3\rfloor} (-1)^k q^{3\textstyle\binom{k}{2}} \gaussian{m+n-3k}{m} \gaussiancube{m+1}{k} 
  =  \sum_{k=0}^n \cos(\frac{2k-n}{3}\pi) q^{{\textstyle\binom{n-k}{2}}+\textstyle\binom{k}{2}} \gaussian{m+1}{n-k} \gaussian{m+1}{k} 
\end{equation}
\begin{equation}\label{result5}
 \sum_{k=0}^{\lfloor n/4\rfloor} q^{\textstyle\binom{n-4k}{2}} \gaussian{m+1}{n-4k} \gaussianquart{m+k}{m}
 = \sum_{k=0}^{\lfloor n/2\rfloor} (-1)^k \gaussian{m+n-2k}{m} \gaussiansqr{m+k}{m}
\end{equation}
\begin{equation}\label{result6}
 \sum_{k=0}^{\lfloor n/4\rfloor} (-1)^k q^{4\textstyle\binom{k}{2}} \gaussian{m+n-4k}{m} \gaussianquart{m+1}{k}
 = \sum_{k=0}^{\lfloor n/2\rfloor} q^{{\textstyle\binom{n-2k}{2}}+2\textstyle\binom{k}{2}} \gaussian{m+1}{n-2k} \gaussiansqr{m+1}{k}
\end{equation}
\begin{equation}\label{result7}
 \sum_{k=0}^n\sum_{l=0}^{n-k} (-1)^k 
  q^{a{\textstyle\binom{n-k-l}{2}}+b{\textstyle\binom{k}{2}}} \gaussianpow{p+n-k-l}{p}{c}\gaussianpow{m+1}{k}{b}\gaussianpow{m+l}{m}{b} 
  = q^{a\textstyle\binom{n}{2}}\gaussianpow{p+n}{p}{c}
\end{equation}
\begin{equation}\label{result8}
 \sum_{k=0}^n\sum_{l=0}^{n-k} (-1)^l
  q^{a{\textstyle\binom{n-k-l}{2}}+b{\textstyle\binom{k}{2}}} \gaussianpow{p+n-k-l}{p}{c}\gaussianpow{m+1}{k}{b}\gaussianpow{m+l}{m}{b} 
  = q^{a\textstyle\binom{n}{2}}\gaussianpow{p+n}{p}{c}
\end{equation}
\begin{equation}\label{result9}
 \sum_{k=0}^n\sum_{l=0}^{n-k} (-1)^k 
  q^{a{\textstyle\binom{n-k-l}{2}}+b{\textstyle\binom{k}{2}}} \gaussianpow{p}{n-k-l}{c}\gaussianpow{m+1}{k}{b}\gaussianpow{m+l}{m}{b} 
  = q^{a\textstyle\binom{n}{2}}\gaussianpow{p}{n}{c}
\end{equation}
\begin{equation}\label{result10}
 \sum_{k=0}^n\sum_{l=0}^{n-k} (-1)^l 
  q^{a{\textstyle\binom{n-k-l}{2}}+b{\textstyle\binom{k}{2}}} \gaussianpow{p}{n-k-l}{c}\gaussianpow{m+1}{k}{b}\gaussianpow{m+l}{m}{b} 
  = q^{a\textstyle\binom{n}{2}}\gaussianpow{p}{n}{c}
\end{equation}
In the summands of the last four identities, because of the type of double summation,
$k$ and $l$ can be interchanged, and $l$ can be replaced by $n-k-l$.
When $q=1$ these identities give the following combinatorial identities,
of which (\ref{binomres2}) and (\ref{binomres3}) are known as (3.24) and (3.25) in \cite{G72}.
\begin{equation}
 \sum_{k=0}^n (-1)^k \binom{m+k}{m} \binom{m+1}{n-k} = \delta_{n,0}
\end{equation}
\begin{equation}\label{binomres2}
 \sum_{k=0}^n \binom{m+1}{2k} \binom{m+n-k}{m} = \binom{m+2n}{m}
\end{equation}
\begin{equation}\label{binomres3}
 \sum_{k=0}^n \binom{m+1}{2k+1} \binom{m+n-k}{m} = \binom{m+2n+1}{m}
\end{equation}
\begin{equation}
 \sum_{k=0}^n (-1)^k \binom{m+2k}{m} \binom{m+1}{n-k} = (-1)^n \binom{m+1}{2n}
\end{equation}
\begin{equation}
 \sum_{k=0}^n (-1)^k \binom{m+2k+1}{m} \binom{m+1}{n-k} = (-1)^n \binom{m+1}{2n+1}
\end{equation}
\begin{equation}
 \sum_{k=0}^n (-1)^k \binom{m+1}{3k} \binom{m+n-k}{m} = \sum_{k=0}^{3n} \cos(\frac{2k}{3}\pi)\binom{m+3n-k}{m}\binom{m+k}{m}
\end{equation}
\begin{equation}
 \sum_{k=0}^n (-1)^k \binom{m+1}{3k+1} \binom{m+n-k}{m} = \sum_{k=0}^{3n+1} \cos(\frac{2k-1}{3}\pi)\binom{m+3n-k+1}{m}\binom{m+k}{m}
\end{equation}
\begin{equation}
 \sum_{k=0}^n (-1)^k \binom{m+1}{3k+2} \binom{m+n-k}{m} = \sum_{k=0}^{3n+2} \cos(\frac{2k-2}{3}\pi)\binom{m+3n-k+2}{m}\binom{m+k}{m}
\end{equation}
\begin{equation}
 \sum_{k=0}^n (-1)^k \binom{m+3k}{m} \binom{m+1}{n-k} = \sum_{k=0}^{3n} \cos(\frac{2k}{3}\pi) \binom{m+1}{3n-k}\binom{m+1}{k}
\end{equation}
\begin{equation}
 \sum_{k=0}^n (-1)^k \binom{m+3k+1}{m} \binom{m+1}{n-k} = \sum_{k=0}^{3n+1} \cos(\frac{2k-1}{3}\pi) \binom{m+1}{3n-k+1}\binom{m+1}{k}
\end{equation}
\begin{equation}
 \sum_{k=0}^n (-1)^k \binom{m+3k+2}{m} \binom{m+1}{n-k} = \sum_{k=0}^{3n+2} \cos(\frac{2k-2}{3}\pi) \binom{m+1}{3n-k+2}\binom{m+1}{k}
\end{equation}
\begin{equation}
 \sum_{k=0}^n \binom{m+1}{4k} \binom{m+n-k}{m} = \sum_{k=0}^{2n} (-1)^k \binom{m+2k}{m} \binom{m+2n-k}{m}
\end{equation}
\begin{equation}
 \sum_{k=0}^n \binom{m+1}{4k+1} \binom{m+n-k}{m} = \sum_{k=0}^{2n} (-1)^k \binom{m+2k+1}{m} \binom{m+2n-k}{m}
\end{equation}
\begin{equation}
 \sum_{k=0}^n \binom{m+1}{4k+2} \binom{m+n-k}{m} = \sum_{k=0}^{2n+1} (-1)^{k+1} \binom{m+2k}{m} \binom{m+2n-k+1}{m}
\end{equation}
\begin{equation}
 \sum_{k=0}^n \binom{m+1}{4k+3} \binom{m+n-k}{m} = \sum_{k=0}^{2n+1} (-1)^{k+1} \binom{m+2k+1}{m} \binom{m+2n-k+1}{m}
\end{equation}
\begin{equation}
 \sum_{k=0}^n (-1)^k \binom{m+4k}{m} \binom{m+1}{n-k} = (-1)^n \sum_{k=0}^{2n} \binom{m+1}{2k} \binom{m+1}{2n-k}
\end{equation}
\begin{equation}
 \sum_{k=0}^n (-1)^k \binom{m+4k+1}{m} \binom{m+1}{n-k} = (-1)^n \sum_{k=0}^{2n} \binom{m+1}{2k+1} \binom{m+1}{2n-k}
\end{equation}
\begin{equation}
 \sum_{k=0}^n (-1)^k \binom{m+4k+2}{m} \binom{m+1}{n-k} = (-1)^n \sum_{k=0}^{2n+1} \binom{m+1}{2k} \binom{m+1}{2n-k+1}
\end{equation}
\begin{equation}
 \sum_{k=0}^n (-1)^k \binom{m+4k+3}{m} \binom{m+1}{n-k} = (-1)^n \sum_{k=0}^{2n+1} \binom{m+1}{2k+1} \binom{m+1}{2n-k+1}
\end{equation}
\begin{equation}
 \sum_{k=0}^n\sum_{l=0}^{n-k} (-1)^k
  \binom{p+n-k-l}{p}\binom{m+1}{k}\binom{m+l}{m} = \binom{p+n}{p}
\end{equation}
\begin{equation}
 \sum_{k=0}^n\sum_{l=0}^{n-k} (-1)^l
  \binom{p+n-k-l}{p}\binom{m+1}{k}\binom{m+l}{m} = \binom{p+n}{p}
\end{equation}
\begin{equation}
 \sum_{k=0}^n\sum_{l=0}^{n-k} (-1)^k
  \binom{p}{n-k-l}\binom{m+1}{k}\binom{m+l}{m} = \binom{p}{n}
\end{equation}
\begin{equation}
 \sum_{k=0}^n\sum_{l=0}^{n-k} (-1)^l
  \binom{p}{n-k-l}\binom{m+1}{k}\binom{m+l}{m} = \binom{p}{n}
\end{equation}

\section{Definitions and Basic Identities}

Let the coefficient of a power series be defined as:
\begin{equation}
 [q^n] \sum_{k=0}^{\infty} a_k q^k = a_n
\end{equation}
Let $P(n)$ be the number of integer partitions of $n$,
let $Q(n)$ be the number of integer partitions of $n$ into distinct parts, let $P(n,m)$ be the
number of integer partitions of $n$ into exactly $m$ parts,
and let $Q(n,m)$ be the number of integer partitions of $n$ into exactly $m$ distinct parts.
Let $P(n,m,p)$ be the number of integer partitions of $n$ into exactly $m$ parts, each part at most $p$,
and let $P^*(n,m,p)$ be the number of integer partitions of $n$ into at most $m$ parts, each part at most $p$,
which is the number of Ferrer diagrams that fit in a $m$ by $p$ rectangle:
\begin{equation}\label{pnmpsum}
 P^*(n,m,p) = \sum_{k=0}^m P(n,k,p)
\end{equation}
Let the following definition of the q-binomial coefficient,
also called the Gaussian polynomial, be given.
\begin{definition}
The q-binomial coefficient is defined by \cite{A84,AAR}:
\begin{equation}\label{gaussdef}
 \gaussian{m+p}{m} = \prod_{j=1}^m \frac{1-q^{p+j}}{1-q^j}
\end{equation}
\end{definition}
The q-binomial coefficient is the generating function of $P^*(n,m,p)$ \cite{A84}:
\begin{equation}\label{pstar}
 P^*(n,m,p) = [q^n] \gaussian{m+p}{m}
\end{equation}
In the earlier paper \cite{MK2} it was proved that:
\begin{equation}\label{pnmpeq}
 P^*(n,m,p) = P(n+m,m,p+1)
\end{equation}
For the q-binomial coefficient there is the following symmetry identity \cite{MK2}:
\begin{equation}\label{symm}
 \gaussian{m+p}{m} = \gaussian{m+p}{p}
\end{equation}

\section{Formulas Involving Distinct Partitions}

Let $Q(n,m,p)$ be the number of integer partitions of $n$ into exactly $m$
distinct parts with each part at most $p$.
In the earlier paper \cite{MK2} it was proved that:
\begin{equation}\label{qnmpeq}
 Q(n,m,p) = P(n-m(m-1)/2,m,p-m+1)
\end{equation}
The generating functions for $P(n,m,p)$ and $Q(n,m,p)$ are identities (7.3) and (7.4) in \cite{AE04}:
\begin{equation}\label{gen1}
 \prod_{j=1}^p \frac{1}{1-zq^j} = \sum_{n=0}^{\infty}\sum_{m=0}^{\infty} P(n,m,p) q^n z^m
\end{equation}
\begin{equation}\label{gen2}
 \prod_{j=1}^p (1+zq^j) = \sum_{n=0}^{\infty}\sum_{m=0}^{\infty} Q(n,m,p) q^n z^m
\end{equation}
\begin{theorem}\label{theorem1}
\begin{equation}
 P(n,m,p) = \sum_{k=0}^{\lfloor n/2\rfloor}\sum_{l=0}^{\lfloor m/2\rfloor}
  Q(n-2k,m-2l,p) P(k,l,p)
\end{equation}
\end{theorem}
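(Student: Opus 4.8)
The plan is to work entirely at the level of the two-variable generating functions (\ref{gen1}) and (\ref{gen2}). The key algebraic observation is the factorization
\begin{equation}
 \frac{1}{1-zq^j} = \frac{1+zq^j}{1-z^2q^{2j}},
\end{equation}
which, taken over $j=1,\dots,p$, gives
\begin{equation}
 \prod_{j=1}^p \frac{1}{1-zq^j}
  = \left(\prod_{j=1}^p (1+zq^j)\right)\left(\prod_{j=1}^p \frac{1}{1-z^2q^{2j}}\right).
\end{equation}
So the left-hand product, whose $q^n z^m$ coefficient is $P(n,m,p)$, splits as a product of the generating function for $Q(\cdot,\cdot,p)$ and a second factor.

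Next I would identify that second factor. Substituting $z\mapsto z^2$ and $q\mapsto q^2$ in (\ref{gen1}) yields
\begin{equation}
 \prod_{j=1}^p \frac{1}{1-z^2q^{2j}} = \sum_{k=0}^\infty\sum_{l=0}^\infty P(k,l,p)\, q^{2k} z^{2l},
\end{equation}
so the right-hand side of the factorization becomes
\begin{equation}
 \left(\sum_{a=0}^\infty\sum_{b=0}^\infty Q(a,b,p)\, q^a z^b\right)
 \left(\sum_{k=0}^\infty\sum_{l=0}^\infty P(k,l,p)\, q^{2k} z^{2l}\right).
\end{equation}

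Finally I would extract the coefficient of $q^n z^m$ from both sides. On the left it is $P(n,m,p)$ by (\ref{gen1}); on the right, multiplying the two double series forces $a+2k=n$ and $b+2l=m$, i.e. $a=n-2k$ and $b=m-2l$, with the nonnegativity constraints $n-2k\ge 0$ and $m-2l\ge 0$ cutting the ranges down to $0\le k\le\lfloor n/2\rfloor$ and $0\le l\le\lfloor m/2\rfloor$. This gives exactly the claimed identity. The only point needing a word of care is the legitimacy of manipulating and multiplying these formal power series in $q$ and $z$ and of reading off coefficients termwise; since each factor is a genuine power series (the products are finite in $j$) this is routine, and I expect it to be the only ``obstacle,'' such as it is.
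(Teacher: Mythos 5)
Your proposal is correct and is essentially identical to the paper's own proof: the same factorization $1/(1-zq^j)=(1+zq^j)/(1-z^2q^{2j})$, the same identification of the third factor as the $P$ generating function with $q\mapsto q^2$, $z\mapsto z^2$, and the same coefficient extraction. Nothing to add.
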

\begin{proof}
Using $(1+zq^j)(1-zq^j)=1-z^2q^{2j}$:
\begin{equation}
 \frac{1}{\prod_{j=1}^p(1-zq^j)} = \frac{\prod_{j=1}^p(1+zq^j)}{\prod_{j=1}^p(1-z^2q^{2j})}
\end{equation}
Substituting the generating functions (\ref{gen1}) and (\ref{gen2}):
\begin{equation}
\begin{split}
 & \sum_{n=0}^{\infty}\sum_{m=0}^{\infty}P(n,m,p)q^nz^m
  = ( \sum_{n=0}^{\infty}\sum_{m=0}^{\infty}Q(n,m,p)q^nz^m )(\sum_{n=0}^{\infty}\sum_{m=0}^{\infty}P(n,m,p)q^{2n}z^{2m} ) \\
 & = \sum_{n_1=0}^{\infty}\sum_{n_2=0}^{\infty}\sum_{m_1=0}^{\infty}\sum_{m_2=0}^{\infty} 
  Q(n_1,m_1,p)P(n_2,m_2,p) q^{n_1+2n_2} z^{m_1+2m_2} \\ 
\end{split}
\end{equation}
The coefficients on both sides must be equal,
so $n_1+2n_2=n$ and $m_1+2m_2=m$, which is equivalent to $n_1=n-2n_2$ and $m_1=m-2m_2$:
\begin{equation}
\begin{split}
 P(n,m,p) & = \dblsum{n_1}{n_2}{n_1+2n_2=n}\dblsum{m_1}{m_2}{m_1+2m_2=m} Q(n_1,m_1,p)P(n_2,m_2,p) \\
 & = \sum_{n_2=0}^{\lfloor n/2\rfloor}\sum_{m_2=0}^{\lfloor m/2\rfloor} Q(n-2n_2,m-2m_2,p) P(n_2,m_2,p) \\
\end{split}
\end{equation}
\end{proof}
Let $Q^*(n,m,p)$ be the number of integer partitions of $n$ into at most $m$ distinct parts
with each part at most $p$, which is defined like $P^*(n,m,p)$ in (\ref{pnmpsum}):
\begin{equation}
 Q^*(n,m,p) = \sum_{k=0}^m Q(n,k,p)
\end{equation}
From the previous theorem a relation between $Q^*(n,m,p)$ and $P(n,m,p)$ can be derived.
\begin{theorem}\label{theorem2}
\begin{equation}
 P(n+m,m,p+1) = \sum_{k=0}^{\lfloor n/2\rfloor}\sum_{l=0}^{\lfloor m/2\rfloor} Q^*(n-2k,m-2l,p)P(k,l,p)
\end{equation}
\end{theorem}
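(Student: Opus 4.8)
The plan is to reduce Theorem \ref{theorem2} to Theorem \ref{theorem1} together with the two elementary identities (\ref{pnmpsum}) and (\ref{pnmpeq}). First I would rewrite the left-hand side: by (\ref{pnmpeq}) we have $P(n+m,m,p+1)=P^*(n,m,p)$, and by the definition (\ref{pnmpsum}), $P^*(n,m,p)=\sum_{j=0}^{m}P(n,j,p)$. So it suffices to show that $\sum_{j=0}^{m}P(n,j,p)$ equals the claimed double sum.

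Next I would apply Theorem \ref{theorem1} to each term $P(n,j,p)$, giving
\[
 \sum_{j=0}^{m}P(n,j,p)=\sum_{j=0}^{m}\sum_{k=0}^{\lfloor n/2\rfloor}\sum_{l=0}^{\lfloor j/2\rfloor}Q(n-2k,j-2l,p)\,P(k,l,p).
\]
The $k$-sum has bound $\lfloor n/2\rfloor$ independent of $j$ and $l$, so I would pull it out and concentrate on the inner double sum over $j$ and $l$. Interchanging the order of summation, for a fixed $l$ the constraints $0\le l\le\lfloor j/2\rfloor$ and $0\le j\le m$ become $2l\le j\le m$ (which in particular forces $0\le l\le\lfloor m/2\rfloor$). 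Substituting $j=2l+i$ with $0\le i\le m-2l$ turns the inner $j$-sum into $\sum_{i=0}^{m-2l}Q(n-2k,i,p)$, which by the definition of $Q^*$ is exactly $Q^*(n-2k,m-2l,p)$. Reassembling the three sums yields the statement.

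The argument is essentially bookkeeping, so the only place that needs care is the interchange of the $j$- and $l$-summations: one must check that the resulting range is $0\le l\le\lfloor m/2\rfloor$ and that the collapsed $j$-sum is precisely $Q^*(n-2k,m-2l,p)$; everything else is immediate from the cited results. As an alternative I could give a purely generating-function proof — multiply the identity $\prod_{j=1}^p(1-zq^j)^{-1}=\prod_{j=1}^p(1+zq^j)\big/\prod_{j=1}^p(1-z^2q^{2j})$ by $1/(1-z)$ on both sides to convert the exact-count sums into at-most-count sums, then extract the coefficient of $q^nz^m$ and use (\ref{pnmpeq}) — but routing through Theorem \ref{theorem1} is shorter and reuses work already done.
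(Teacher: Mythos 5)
Your proposal is correct and is essentially identical to the paper's own proof: both rewrite $P(n+m,m,p+1)$ as $P^*(n,m,p)=\sum_{h=0}^m P(n,h,p)$ via (\ref{pnmpeq}) and (\ref{pnmpsum}), expand each term with Theorem \ref{theorem1}, and interchange the $h$- and $l$-summations so the inner sum collapses to $Q^*(n-2k,m-2l,p)$. The bookkeeping on the summation range ($2l\le h\le m$, hence $0\le l\le\lfloor m/2\rfloor$) matches the paper exactly.
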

\begin{proof}
Using the previous theorem and (\ref{pnmpeq}):
\begin{equation}
\begin{split}
 P^*(n,m,p) & = P(n+m,m,p+1) = \sum_{h=0}^m P(n,h,p) \\
 & = \sum_{h=0}^m \sum_{k=0}^{\lfloor n/2\rfloor} \sum_{l=0}^{\lfloor h/2\rfloor} Q(n-2k,h-2l,p)P(k,l,p) \\
 & = \sum_{k=0}^{\lfloor n/2\rfloor} \sum_{l=0}^{\lfloor m/2\rfloor} \sum_{h=2l}^m  Q(n-2k,h-2l,p)P(k,l,p) \\
 & = \sum_{k=0}^{\lfloor n/2\rfloor} \sum_{l=0}^{\lfloor m/2\rfloor} Q^*(n-2k,m-2l,p)P(k,l,p) \\
\end{split}
\end{equation}
\end{proof}
Let $P_{\rm most}(n,p)$ be the number of integer partitions of $n$ with each part at most $p$.
From (\ref{pnmpsum}), (\ref{pnmpeq}) and conjugation of Ferrer diagrams \cite{MK2}:
\begin{equation}
 P_{\rm most}(n,p) = \sum_{k=0}^n P(n,k,p) = P^*(n,n,p) = P(2n,n,p+1) = P(n+p,p)
\end{equation}
Obviously $P^*(n,m,p)=P_{\rm most}(n,p)$ when $m\geq n$.
Let $Q_{\rm most}(n,p)$ be the number of integer partitions of $n$ into distinct parts with each part at most $p$,
for which $Q^*(n,m,p)=Q_{\rm most}(n,p)$ when $m\geq n$.
When taking $m=n$ in this theorem for nonzero summands $l\leq k$ and therefore $n-2l\geq n-2k$:
\begin{equation}
 P(n+p,p) = \sum_{k=0}^{\lfloor n/2\rfloor} Q_{\rm most}(n-2k,p) P(p+k,p)
\end{equation}
From this identity follows as a special case when taking $p=n$,
and using $P(2n,n)=P(n)$ and from \cite{MK} $P(n,m)=P(n-m)$ if $2m\geq n$
and therefore $P(n+k,n)=P(k)$ if $n\geq k$:
\begin{equation}
 P(n) = \sum_{k=0}^{\lfloor n/2\rfloor} Q(n-2k)P(k)
\end{equation}
\begin{theorem}\label{theorem3}
\begin{equation}
 Q(n,m,p) = \sum_{k=0}^{\lfloor n/2\rfloor}\sum_{l=0}^{\lfloor m/2\rfloor}
  (-1)^l P(n-2k,m-2l,p) Q(k,l,p)
\end{equation}
\end{theorem}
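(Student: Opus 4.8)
The plan is to mimic the proof of Theorem~\ref{theorem1}, but to read the quadratic factorization $(1+zq^j)(1-zq^j)=1-z^2q^{2j}$ in the opposite direction so that $Q$ plays the role that $P$ played there. First I would write
\begin{equation}
 \prod_{j=1}^p(1+zq^j) = \Bigl(\prod_{j=1}^p(1-z^2q^{2j})\Bigr)\Bigl(\prod_{j=1}^p\frac{1}{1-zq^j}\Bigr).
\end{equation}
The left-hand side is the generating function (\ref{gen2}), and the second factor on the right is (\ref{gen1}). The key observation — and the only step I expect to require any thought — is that the first factor on the right is again an instance of (\ref{gen2}): substituting $q\mapsto q^2$ and $z\mapsto -z^2$ in (\ref{gen2}) (a legitimate formal substitution, the product being finite) gives
\begin{equation}\label{qsubst}
 \prod_{j=1}^p(1-z^2q^{2j}) = \sum_{n=0}^{\infty}\sum_{m=0}^{\infty}(-1)^m Q(n,m,p)\,q^{2n}z^{2m}.
\end{equation}

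Next I would substitute these three series and expand the Cauchy product, obtaining
\begin{equation}
 \sum_{n=0}^{\infty}\sum_{m=0}^{\infty}Q(n,m,p)q^nz^m
  = \sum_{n_1=0}^{\infty}\sum_{n_2=0}^{\infty}\sum_{m_1=0}^{\infty}\sum_{m_2=0}^{\infty}(-1)^{m_1}Q(n_1,m_1,p)P(n_2,m_2,p)\,q^{2n_1+n_2}z^{2m_1+m_2}.
\end{equation}
Equating coefficients of $q^nz^m$ on both sides forces $2n_1+n_2=n$ and $2m_1+m_2=m$, i.e.\ $n_2=n-2n_1$ with $0\le n_1\le\lfloor n/2\rfloor$ and $m_2=m-2m_1$ with $0\le m_1\le\lfloor m/2\rfloor$. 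Renaming $n_1=k$ and $m_1=l$ yields
\begin{equation}
 Q(n,m,p) = \sum_{k=0}^{\lfloor n/2\rfloor}\sum_{l=0}^{\lfloor m/2\rfloor}(-1)^l P(n-2k,m-2l,p)\,Q(k,l,p),
\end{equation}
which is the assertion.

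The main obstacle is precisely spotting the substitution $(q,z)\mapsto(q^2,-z^2)$ that converts $\prod_{j=1}^p(1-z^2q^{2j})$ back into the $Q$-generating function with the alternating sign $(-1)^m$ in (\ref{qsubst}); after that, everything is the same coefficient-extraction bookkeeping used for Theorem~\ref{theorem1}. I would also remark, as the paper does elsewhere, that $P(n-2k,m-2l,p)$ vanishes unless $n-2k\ge0$ and $m-2l\ge0$, so the floor bounds on $k$ and $l$ introduce no spurious terms and the finite sum is exactly the coefficient comparison.
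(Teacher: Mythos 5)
Your proof is correct and takes essentially the same route as the paper: both rest on the factorization $(1+w)(1-w)=1-w^2$ applied to the generating functions (\ref{gen1}) and (\ref{gen2}), followed by the same coefficient extraction. The only difference is cosmetic --- the paper substitutes $z\mapsto iz$ and recovers the sign $(-1)^l$ from $i^{-2m_2}=(-1)^{m_2}$, whereas you stay entirely real and obtain the sign directly from the substitution $(q,z)\mapsto(q^2,-z^2)$ in (\ref{gen2}), which is arguably the cleaner presentation.
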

\begin{proof}
Using $(1+izq^j)(1-izq^j)=1+z^2q^{2j}$:
\begin{equation}
 \prod_{j=1}^p(1+izq^j) = \frac{\prod_{j=1}^p(1+z^2q^{2j})}{\prod_{j=1}^p(1-izq^j)}
\end{equation}
Substituting the generating functions (\ref{gen1}) and (\ref{gen2}):
\begin{equation}
\begin{split}
 & \sum_{n=0}^{\infty}\sum_{m=0}^{\infty}Q(n,m,p)i^mq^nz^m
  = ( \sum_{n=0}^{\infty}\sum_{m=0}^{\infty}P(n,m,p)i^mq^nz^m )(\sum_{n=0}^{\infty}\sum_{m=0}^{\infty}Q(n,m,p)q^{2n}z^{2m} ) \\
 & = \sum_{n_1=0}^{\infty}\sum_{n_2=0}^{\infty}\sum_{m_1=0}^{\infty}\sum_{m_2=0}^{\infty} 
  P(n_1,m_1,p)Q(n_2,m_2,p) i^{m1}q^{n_1+2n_2} z^{m_1+2m_2} \\ 
\end{split}
\end{equation}
The coefficients on both sides must be equal,
so $n_1+2n_2=n$ and $m_1+2m_2=m$, which is equivalent to $n_1=n-2n_2$ and $m_1=m-2m_2$:
\begin{equation}
\begin{split}
 i^mQ(n,m,p) & = \dblsum{n_1}{n_2}{n_1+2n_2=n}\dblsum{m_1}{m_2}{m_1+2m_2=m} i^{m_1}P(n_1,m_1,p)Q(n_2,m_2,p) \\
 & = \sum_{n_2=0}^{\lfloor n/2\rfloor}\sum_{m_2=0}^{\lfloor m/2\rfloor} i^{m-2m_2}P(n-2n_2,m-2m_2,p) Q(n_2,m_2,p) \\
\end{split}
\end{equation}
With $i^{-2m_2}=(-1)^{m_2}$ the theorem is proved.
\end{proof}
Using a similar derivation as in theorem \ref{theorem2} gives:
\begin{equation}
 Q^*(n,m,p) = \sum_{k=0}^{\lfloor n/2\rfloor}\sum_{l=0}^{\lfloor m/2\rfloor} (-1)^l P(n+m-2(k+l),m-2l,p+1)Q(k,l,p)
\end{equation}
Using a similar reasoning as above:
\begin{equation}
 Q(n) = \sum_{k=0}^{\lfloor n/2\rfloor}\sum_{l=0}^{\lfloor n/2\rfloor} (-1)^l P(n-2k) Q(k,l)
\end{equation}
\begin{theorem}\label{theorem6}
\begin{equation}
 \sum_{k=0}^{\lfloor n/3\rfloor}\sum_{l=0}^{\lfloor m/3\rfloor} (-1)^l Q(n-3k,m-3l,p)P(k,l,p)
 = \sum_{k=0}^n\sum_{l=0}^m \cos(\frac{2l-m}{3}\pi) P(n-k,m-l,p) P(k,l,p)
\end{equation}
\end{theorem}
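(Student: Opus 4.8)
The plan is to follow the generating-function method of Theorems~\ref{theorem1} and~\ref{theorem3}, this time using the cyclotomic factorization $1+x^3=(1+x)(1-x+x^2)$ together with the fact that the roots of $1-x+x^2$ are $e^{\pm i\pi/3}$, so that $1-x+x^2=(1-e^{i\pi/3}x)(1-e^{-i\pi/3}x)$ (indeed $e^{i\pi/3}+e^{-i\pi/3}=2\cos(\pi/3)=1$ and $|e^{i\pi/3}|=1$).

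First I would recognise the left-hand side as a coefficient of a single product. Substituting $q\mapsto q^3$ and $z\mapsto -z^3$ in (\ref{gen1}) gives $\prod_{j=1}^p(1+z^3q^{3j})^{-1}=\sum_{k,l}(-1)^lP(k,l,p)q^{3k}z^{3l}$; multiplying this by (\ref{gen2}) and extracting $[q^nz^m]$ (so $n_1+3k=n$, $m_1+3l=m$ with $n_1,m_1\ge 0$, forcing $k\le\lfloor n/3\rfloor$, $l\le\lfloor m/3\rfloor$) produces exactly $\sum_{k}\sum_{l}(-1)^lQ(n-3k,m-3l,p)P(k,l,p)$, the left side of the theorem. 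But factorwise $1+z^3q^{3j}=(1+zq^j)(1-zq^j+z^2q^{2j})$, so this product telescopes to $\prod_{j=1}^p(1-zq^j+z^2q^{2j})^{-1}$.

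Next I would factor $\prod_{j=1}^p(1-zq^j+z^2q^{2j})^{-1}=\bigl(\prod_{j=1}^p(1-e^{i\pi/3}zq^j)^{-1}\bigr)\bigl(\prod_{j=1}^p(1-e^{-i\pi/3}zq^j)^{-1}\bigr)$ and apply (\ref{gen1}) with $z$ replaced by $e^{i\pi/3}z$ and by $e^{-i\pi/3}z$, obtaining $\sum_{n_1,m_1}e^{im_1\pi/3}P(n_1,m_1,p)q^{n_1}z^{m_1}$ and $\sum_{n_2,m_2}e^{-im_2\pi/3}P(n_2,m_2,p)q^{n_2}z^{m_2}$. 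Taking the coefficient of $q^nz^m$ in the product and parametrising by the second pair of indices $(k,l)=(n_2,m_2)$ yields $\sum_{k=0}^n\sum_{l=0}^m e^{i(m-2l)\pi/3}P(n-k,m-l,p)P(k,l,p)$. Because the product $\prod_{j=1}^p(1-zq^j+z^2q^{2j})^{-1}$ is a power series in $z$ with real (indeed integer) coefficients, this sum is real and so equals its real part; since $\operatorname{Re}e^{i(m-2l)\pi/3}=\cos\bigl(\tfrac{m-2l}{3}\pi\bigr)=\cos\bigl(\tfrac{2l-m}{3}\pi\bigr)$, this is precisely the right side of the theorem. Equating the two evaluations of $[q^nz^m]$ of the common product completes the argument.

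I expect the only delicate points to be bookkeeping: applying the three substitutions ($q\mapsto q^3,\ z\mapsto -z^3$, and $z\mapsto e^{\pm i\pi/3}z$) to the correct factors, tracking the index shifts so that the floor bounds on the left match $n-3k\ge 0$ and $m-3l\ge 0$, and including the one-line justification that each coefficient is real, which is what makes passing from $e^{i(m-2l)\pi/3}$ to $\cos\bigl(\tfrac{2l-m}{3}\pi\bigr)$ legitimate. There should be no deeper obstacle.
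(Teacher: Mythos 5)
Your proof is correct and follows essentially the same route as the paper: factor $1\pm z^3q^{3j}$ into linear factors with sixth roots of unity, read each factor as (\ref{gen1}) or (\ref{gen2}) with $z$ rescaled, extract $[q^nz^m]$, and pass to real parts. The only (cosmetic) difference is a rotation $z\mapsto e^{-i\pi/3}z$ of the paper's identity, which places the phases conjugate-symmetrically on the two $P$-factors so that reality of the right-hand side is immediate from the real coefficients of $\prod_j(1-zq^j+z^2q^{2j})^{-1}$, whereas the paper discards the imaginary part via the index symmetry $k\mapsto n-k$, $l\mapsto m-l$.
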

\begin{proof}
Using $(1-zq^j)(1-(-1)^{2/3}zq^j)(1+(-1)^{1/3}zq^j)=1-z^3q^{3j}$:
\begin{equation}
 \frac{\prod_{j=1}^p(1+(-1)^{1/3}zq^j)}{\prod_{j=1}^p(1-z^3q^{3j})} 
 = \frac{1}{\prod_{j=1}^p(1-zq^j)\prod_{j=1}^p(1-(-1)^{2/3}zq^j)} 
\end{equation}
Substituting the generating functions (\ref{gen1}) and (\ref{gen2}):
\begin{equation}
\begin{split}
  \sum_{n_1=0}^{\infty}\sum_{n_2=0}^{\infty}\sum_{m_1=0}^{\infty}\sum_{m_2=0}^{\infty} 
  Q(n_1,m_1,p)P(n_2,m_2,p) (-1)^{m_1/3}q^{n_1+3n_2} z^{m_1+3m_2} \\ 
 = \sum_{n_3=0}^{\infty}\sum_{n_4=0}^{\infty}\sum_{m_3=0}^{\infty}\sum_{m_4=0}^{\infty} 
  P(n_3,m_3,p)P(n_4,m_4,p) (-1)^{2m_3/3}q^{n_3+n_4} z^{m_3+m_4} \\ 
\end{split}
\end{equation}
Taking the coefficients on both sides equal to $q^nz^m$, then $n_1+3n_2=n_3+n_4=n$ and $m_1+3m_2=m_3+m_4=m$,
which is equivalent to $n_1=n-3n_2$, $n_3=n-n_4$, $m_1=m-3m_2$ and $m_3=m-m_4$, gives:
\begin{equation}
 \sum_{k=0}^{\lfloor n/3\rfloor}\sum_{l=0}^{\lfloor m/3\rfloor} (-1)^l Q(n-3k,m-3l,p)P(k,l,p)
 = \sum_{k=0}^n\sum_{l=0}^m (-1)^{\frac{m-2l}{3}} P(n-k,m-l,p) P(k,l,p)
\end{equation}
Equating the imaginary parts of this identity gives:
\begin{equation}
 \sum_{k=0}^n\sum_{l=0}^m \sin(\frac{m-2l}{3}\pi) P(n-k,m-l,p) P(k,l,p) = 0
\end{equation}
Changing $k$ into $n-k$ and $l$ into $m-l$ changes the sign of the summand,
and therefore this identity is trivial.
Equating the real parts of the previous identity and using $\cos(x)=\cos(-x)$ gives the theorem.
\end{proof}
\begin{theorem}\label{theorem7}
\begin{equation}
 \sum_{k=0}^{\lfloor n/3\rfloor}\sum_{l=0}^{\lfloor m/3\rfloor} (-1)^l P(n-3k,m-3l,p)Q(k,l,p)
 = \sum_{k=0}^n\sum_{l=0}^m \cos(\frac{2l-m}{3}\pi) Q(n-k,m-l,p) Q(k,l,p) 
\end{equation}
\end{theorem}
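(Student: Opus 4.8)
The plan is to follow the proof of Theorem~\ref{theorem6} almost verbatim, with the roles of $P$ and $Q$ interchanged, which amounts to replacing the cube-factorization of $1-z^3q^{3j}$ used there by one of $1+z^3q^{3j}$. Substituting $-z$ for $z$ in the identity $(1-zq^j)(1-(-1)^{2/3}zq^j)(1+(-1)^{1/3}zq^j)=1-z^3q^{3j}$ gives
\begin{equation}
 (1+zq^j)(1+(-1)^{2/3}zq^j)(1-(-1)^{1/3}zq^j) = 1+z^3q^{3j},
\end{equation}
so that, after taking the product over $j=1,\dots,p$,
\begin{equation}
 \prod_{j=1}^p(1+zq^j)\prod_{j=1}^p(1+(-1)^{2/3}zq^j) = \frac{\prod_{j=1}^p(1+z^3q^{3j})}{\prod_{j=1}^p(1-(-1)^{1/3}zq^j)} .
\end{equation}

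Next I would substitute the generating functions (\ref{gen1}) and (\ref{gen2}): the left side is the Cauchy product (in $q$ and $z$) of $\sum_{n,m} Q(n,m,p)q^nz^m$ with $\sum_{n,m} Q(n,m,p)(-1)^{2m/3}q^nz^m$, while on the right $\prod_{j=1}^p(1+z^3q^{3j})=\sum_{n,m} Q(n,m,p)q^{3n}z^{3m}$ and $1/\prod_{j=1}^p(1-(-1)^{1/3}zq^j)=\sum_{n,m} P(n,m,p)(-1)^{m/3}q^nz^m$. Comparing coefficients of $q^nz^m$ and using the resulting constraints $n_1+n_2=n$, $m_1+m_2=m$ on the left and $3n_3+n_4=n$, $3m_3+m_4=m$ on the right to eliminate the redundant summation indices, exactly as in Theorems~\ref{theorem1} and~\ref{theorem6}, reduces the four-fold sums to
\begin{equation}
 \sum_{k=0}^n\sum_{l=0}^m (-1)^{2l/3}\,Q(n-k,m-l,p)\,Q(k,l,p)
  = \sum_{k=0}^{\lfloor n/3\rfloor}\sum_{l=0}^{\lfloor m/3\rfloor} (-1)^{(m-3l)/3}\,P(n-3k,m-3l,p)\,Q(k,l,p) .
\end{equation}
Writing $(-1)^{2l/3}=e^{2\pi i l/3}$ and $(-1)^{(m-3l)/3}=e^{\pi i m/3}(-1)^{l}$ (both with the same branch $(-1)^{1/3}=e^{\pi i/3}$ as in Theorem~\ref{theorem6}) and dividing by $e^{\pi i m/3}$ turns this into
\begin{equation}
 \sum_{k=0}^{\lfloor n/3\rfloor}\sum_{l=0}^{\lfloor m/3\rfloor} (-1)^{l}\,P(n-3k,m-3l,p)\,Q(k,l,p)
  = \sum_{k=0}^n\sum_{l=0}^m e^{\pi i(2l-m)/3}\,Q(n-k,m-l,p)\,Q(k,l,p) .
\end{equation}

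Since the left-hand side is real, I would then split the right-hand side into real and imaginary parts. The imaginary part $\sum_{k,l}\sin(\tfrac{2l-m}{3}\pi)\,Q(n-k,m-l,p)\,Q(k,l,p)$ vanishes identically: the change of variables $k\mapsto n-k$, $l\mapsto m-l$ leaves the product $Q(n-k,m-l,p)Q(k,l,p)$ unchanged but sends $\sin(\tfrac{2l-m}{3}\pi)$ to $-\sin(\tfrac{2l-m}{3}\pi)$, so the terms cancel in pairs, the only fixed points $n=2k,\ m=2l$ contributing $\sin 0=0$. Equating real parts and using $\cos(x)=\cos(-x)$ yields Theorem~\ref{theorem7}. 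I expect no real obstacle here beyond keeping the sixth-root-of-unity phases consistent; the whole proof rests on spotting the single algebraic factorization displayed above, which is essentially forced once one asks for a product decomposition of $1+z^3q^{3j}$ pairing two $Q$-type factors against one $P$-type factor.
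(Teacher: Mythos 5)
Your proposal is correct and follows essentially the same route as the paper: substitute $-z$ for $z$ in the factorization from Theorem~\ref{theorem6} to obtain $\prod_j(1+z^3q^{3j})/\prod_j(1-(-1)^{1/3}zq^j)=\prod_j(1+zq^j)\prod_j(1+(-1)^{2/3}zq^j)$, observe that this interchanges the roles of $P$ and $Q$, then kill the imaginary part by the symmetry $k\mapsto n-k$, $l\mapsto m-l$ and equate real parts. The paper states the $P\leftrightarrow Q$ interchange more tersely, but your explicit bookkeeping of the sixth-root-of-unity phases matches its computation.
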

\begin{proof}
In the previous theorem replacing $z$ by $-z$ leads to:
\begin{equation}
 \frac{\prod_{j=1}^p(1+z^3q^{3j})}{\prod_{j=1}^p(1-(-1)^{1/3}zq^j)} = \prod_{j=1}^p(1+zq^j)\prod_{j=1}^p(1+(-1)^{2/3}zq^j)
\end{equation}
When comparing this with the previous theorem it is clear that $P$ and $Q$ are interchanged,
which gives:
\begin{equation}
 \sum_{k=0}^{\lfloor n/3\rfloor}\sum_{l=0}^{\lfloor m/3\rfloor} (-1)^l P(n-3k,m-3l,p)Q(k,l,p)
 = \sum_{k=0}^n\sum_{l=0}^m (-1)^{\frac{m-2l}{3}} Q(n-k,m-l,p) Q(k,l,p)
\end{equation}
Equating the imaginary parts of this identity gives:
\begin{equation}
 \sum_{k=0}^n\sum_{l=0}^m \sin(\frac{m-2l}{3}\pi) Q(n-k,m-l,p) Q(k,l,p) = 0
\end{equation}
As in the previous theorem this identity is trivial.
Equating the real parts of the previous identity and using $\cos(x)=\cos(-x)$ gives the theorem.
\end{proof}
\begin{theorem}\label{theorem8}
\begin{equation}
 \sum_{k=0}^{\lfloor n/4\rfloor}\sum_{l=0}^{\lfloor m/4\rfloor} Q(n-4k,m-4l,p)P(k,l,p)
 = \sum_{k=0}^{\lfloor n/2\rfloor}\sum_{l=0}^{\lfloor m/2\rfloor} (-1)^l P(n-2k,m-2l,p) P(k,l,p)  
\end{equation}
\end{theorem}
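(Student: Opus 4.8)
\section*{Proof proposal for Theorem \ref{theorem8}}

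The plan is to follow the same generating-function strategy used in Theorems \ref{theorem1} and \ref{theorem3}: exhibit an algebraic identity between finite products in $z$ and $q$, substitute the generating functions (\ref{gen1}) and (\ref{gen2}), and compare coefficients of $q^nz^m$.

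First I would establish the underlying product identity. Iterating the factorizations $1-z^2q^{2j}=(1-zq^j)(1+zq^j)$ and $1-z^4q^{4j}=(1-z^2q^{2j})(1+z^2q^{2j})$ gives $1-z^4q^{4j}=(1-zq^j)(1+zq^j)(1+z^2q^{2j})$, hence, after cancelling $\prod(1+zq^j)$,
\begin{equation}
 \prod_{j=1}^p(1+zq^j)\cdot\prod_{j=1}^p\frac{1}{1-z^4q^{4j}}
 = \prod_{j=1}^p\frac{1}{1-zq^j}\cdot\prod_{j=1}^p\frac{1}{1+z^2q^{2j}} .
\end{equation}
Next I would rewrite each of the four products as a generating function: by (\ref{gen2}), $\prod_{j=1}^p(1+zq^j)=\sum_{n,m}Q(n,m,p)q^nz^m$; by (\ref{gen1}) with $q\mapsto q^4,\ z\mapsto z^4$, $\prod_{j=1}^p\frac{1}{1-z^4q^{4j}}=\sum_{n,m}P(n,m,p)q^{4n}z^{4m}$; by (\ref{gen1}), $\prod_{j=1}^p\frac{1}{1-zq^j}=\sum_{n,m}P(n,m,p)q^nz^m$; and by (\ref{gen1}) with $q\mapsto q^2,\ z\mapsto -z^2$, $\prod_{j=1}^p\frac{1}{1+z^2q^{2j}}=\sum_{n,m}(-1)^mP(n,m,p)q^{2n}z^{2m}$. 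Multiplying the two series on each side and extracting the coefficient of $q^nz^m$: on the left the constraints $n_1+4n_2=n$, $m_1+4m_2=m$ give $\sum_{k=0}^{\lfloor n/4\rfloor}\sum_{l=0}^{\lfloor m/4\rfloor}Q(n-4k,m-4l,p)P(k,l,p)$; on the right the constraints $n_3+2n_4=n$, $m_3+2m_4=m$, with the weight $(-1)^{m_4}$ on the term $P(n_4,m_4,p)q^{2n_4}z^{2m_4}$, give $\sum_{k=0}^{\lfloor n/2\rfloor}\sum_{l=0}^{\lfloor m/2\rfloor}(-1)^lP(n-2k,m-2l,p)P(k,l,p)$. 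Equating the two sides yields the theorem.

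I expect no real obstacle: unlike Theorems \ref{theorem6} and \ref{theorem7} there is no complex substitution, so no splitting into real and imaginary parts is needed. The only point deserving a moment's care is the bookkeeping of which variable receives the $q^2,z^2$ versus $q^4,z^4$ substitution and where the sign lands; in particular the factor $(-1)^l$ on the right attaches to the index paired with $P(k,l,p)$ (i.e. to $l$, not to $m-2l$), which is immediate once $\prod_{j=1}^p\frac{1}{1+z^2q^{2j}}$ is written as $\sum_{n,m}(-1)^mP(n,m,p)q^{2n}z^{2m}$.
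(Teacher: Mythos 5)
Your proposal is correct and follows essentially the same route as the paper: factor $1-z^4q^{4j}$, substitute the generating functions (\ref{gen1}) and (\ref{gen2}), and compare coefficients of $q^nz^m$. The only difference is cosmetic: the paper uses the complex factorization $(1+izq^j)(1-izq^j)(1-z^2q^{2j})=1-z^4q^{4j}$ (with the $i^m$ factors cancelling at the end, as in Theorem \ref{theorem3}), whereas your identity $\frac{1+zq^j}{1-z^4q^{4j}}=\frac{1}{(1-zq^j)(1+z^2q^{2j})}$ is the same relation after the substitution $z\mapsto -iz$ and stays entirely real, which slightly simplifies the bookkeeping.
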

\begin{proof}
Using $(1+izq^j)(1-izq^j)(1-z^2q^{2j})=1-z^4q^{4j}$:
\begin{equation}
 \frac{\prod_{j=1}^p(1+izq^j)}{\prod_{j=1}^p(1-z^4q^{4j})} 
 = \frac{1}{\prod_{j=1}^p(1-izq^j)\prod_{j=1}^p(1-z^2q^{2j})} 
\end{equation}
The proof is similar to the previous proofs.
\end{proof}
\begin{theorem}\label{theorem9}
\begin{equation}
 \sum_{k=0}^{\lfloor n/4\rfloor}\sum_{l=0}^{\lfloor m/4\rfloor} (-1)^l P(n-4k,m-4l,p)Q(k,l,p)
 = \sum_{k=0}^{\lfloor n/2\rfloor}\sum_{l=0}^{\lfloor m/2\rfloor} Q(n-2k,m-2l,p) Q(k,l,p)  
\end{equation}
\end{theorem}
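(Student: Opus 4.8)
The plan is to repeat the generating-function dissection used in Theorems \ref{theorem6}--\ref{theorem8}, but with the factors of $1-z^4q^{4j}$ grouped so that only the distinct-part generating function (\ref{gen2}) together with one copy of (\ref{gen1}) appears. The starting point is the polynomial identity
\[
 (1-zq^j)(1+zq^j)(1+z^2q^{2j}) = (1-z^2q^{2j})(1+z^2q^{2j}) = 1-z^4q^{4j},
\]
which is exactly the factorization used in the proof of Theorem \ref{theorem8} after the substitution $z\mapsto iz$. Taking the product over $j=1,\dots,p$ and rearranging gives
\[
 \frac{\prod_{j=1}^p(1-z^4q^{4j})}{\prod_{j=1}^p(1-zq^j)} = \prod_{j=1}^p(1+zq^j)\prod_{j=1}^p(1+z^2q^{2j}) .
\]

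Next I would substitute the generating functions. By (\ref{gen2}), applied with $q$, with $q^2$ and $z^2$, and with $q^4$ and $-z^4$ in place of $q$ and $z$, the right-hand side equals $\big(\sum_{n,m}Q(n,m,p)q^nz^m\big)\big(\sum_{n,m}Q(n,m,p)q^{2n}z^{2m}\big)$, while $\prod_{j=1}^p(1-z^4q^{4j})=\sum_{n,m}Q(n,m,p)(-1)^mq^{4n}z^{4m}$; by (\ref{gen1}) the remaining factor $1/\prod_{j=1}^p(1-zq^j)$ equals $\sum_{n,m}P(n,m,p)q^nz^m$. Multiplying the two power series on each side yields an identity between fourfold sums, in which the exponents of $q$ and $z$ are $n_3+4n_4$, $m_3+4m_4$ on the left and $n_1+2n_2$, $m_1+2m_2$ on the right.

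Finally I would equate the coefficient of $q^nz^m$. On the left the constraints $n_3+4n_4=n$, $m_3+4m_4=m$ together with the factor $(-1)^{m_4}$ give, after writing $k=n_4$, $l=m_4$, the sum $\sum_{k=0}^{\lfloor n/4\rfloor}\sum_{l=0}^{\lfloor m/4\rfloor}(-1)^lP(n-4k,m-4l,p)Q(k,l,p)$; on the right the constraints $n_1+2n_2=n$, $m_1+2m_2=m$ give, after writing $k=n_2$, $l=m_2$, the sum $\sum_{k=0}^{\lfloor n/2\rfloor}\sum_{l=0}^{\lfloor m/2\rfloor}Q(n-2k,m-2l,p)Q(k,l,p)$. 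Equating these two expressions is precisely the statement of the theorem.

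I do not expect a genuine obstacle here, since this is the same mechanism as in the preceding theorems; the only mild point of care is the sign bookkeeping, namely confirming that the factor $(-1)^m$ appearing in the expansion of $\prod_{j=1}^p(1-z^4q^{4j})$ is carried directly onto the summation index and becomes $(-1)^l$ attached to $P(n-4k,m-4l,p)Q(k,l,p)$, and that the effective ranges of the summation indices are $\lfloor n/4\rfloor,\lfloor m/4\rfloor$ on one side and $\lfloor n/2\rfloor,\lfloor m/2\rfloor$ on the other, the remaining index values contributing zero because the corresponding $P$ or $Q$ values vanish.
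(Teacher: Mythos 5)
Your proof is correct, and it is the same generating-function dissection the paper intends: factor the quartic polynomial, substitute (\ref{gen1}) and (\ref{gen2}) with rescaled arguments, and equate coefficients of $q^nz^m$. The only difference is the normalization: the paper starts from $(1+(-1)^{1/4}zq^j)(1-(-1)^{1/4}zq^j)(1+iz^2q^{2j})=1+z^4q^{4j}$, so its version carries eighth roots of unity that must be tracked and cancelled, whereas your identity $\prod_j(1-z^4q^{4j})/\prod_j(1-zq^j)=\prod_j(1+zq^j)\prod_j(1+z^2q^{2j})$ is obtained from the paper's by the substitution $z\mapsto(-1)^{-1/4}z$ and keeps everything real, with the sign $(-1)^l$ entering transparently through the expansion of $\prod_j(1+(-z^4)q^{4j})$. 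Your sign and index bookkeeping is right, so this is a slightly cleaner route to the identical conclusion.
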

\begin{proof}
Using $(1+(-1)^{1/4}zq^j)(1-(-1)^{1/4}zq^j)(1+iz^2q^{2j})=1+z^4q^{4j}$:
\begin{equation}
 \frac{\prod_{j=1}^p(1+z^4q^{4j})}{\prod_{j=1}^p(1-(-1)^{1/4}zq^j)} 
 = \prod_{j=1}^p(1+(-1)^{1/4}zq^j)\prod_{j=1}^p(1+iz^2q^{2j})
\end{equation}
The proof is similar to the previous proofs.
\end{proof}
\begin{theorem}\label{simple}
\begin{equation}
 \sum_{k=0}^n\sum_{l=0}^m (-1)^l P(n-k,m-l,p)Q(k,l,p) = \delta_{n,0}\delta_{m,0}
\end{equation}
\end{theorem}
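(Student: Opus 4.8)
The plan is to read off the identity as the coefficient extraction from a trivial product of generating functions, exactly in the style of the preceding theorems but with the simplest possible algebraic relation, namely $(1-zq^j)\cdot\frac{1}{1-zq^j}=1$.

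First I would write $\prod_{j=1}^p(1-zq^j)\cdot\prod_{j=1}^p\frac{1}{1-zq^j}=1$. The second factor is the generating function (\ref{gen1}), $\sum_{n=0}^\infty\sum_{m=0}^\infty P(n,m,p)q^nz^m$. For the first factor I would use (\ref{gen2}) with $z$ replaced by $-z$, which is legitimate since (\ref{gen2}) is a polynomial identity in $z$; this gives $\prod_{j=1}^p(1-zq^j)=\sum_{n=0}^\infty\sum_{m=0}^\infty Q(n,m,p)(-1)^mq^nz^m$. Multiplying the two power series and writing the product as its Cauchy product,
\begin{equation*}
 1=\sum_{n_1,n_2=0}^\infty\;\sum_{m_1,m_2=0}^\infty (-1)^{m_1}Q(n_1,m_1,p)P(n_2,m_2,p)\,q^{n_1+n_2}z^{m_1+m_2}.
\end{equation*}

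Then I would equate coefficients of $q^nz^m$ on both sides. On the right, setting $n_1=k$, $m_1=l$, $n_2=n-k$, $m_2=m-l$ (with $0\le k\le n$, $0\le l\le m$ the only ranges producing nonzero summands, since $P$ and $Q$ vanish for negative arguments), the coefficient is exactly $\sum_{k=0}^n\sum_{l=0}^m(-1)^l P(n-k,m-l,p)Q(k,l,p)$. On the left, the coefficient of $q^nz^m$ in the constant $1$ is $\delta_{n,0}\delta_{m,0}$. Equating the two yields the theorem.

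There is essentially no obstacle here: the only thing worth stating carefully is that the substitution $z\mapsto -z$ is valid in the finite product (\ref{gen2}) and that the double sum in the statement is literally the Cauchy-product coefficient, so the truncation of the summation ranges to $k\le n$, $l\le m$ is automatic. One could alternatively remark that this identity is the special case of Theorem \ref{theorem3}'s generating-function step, or that it follows from $\gaussian{m+p}{m}$-type inversion, but the direct two-line product argument above is the cleanest and mirrors the proofs of Theorems \ref{theorem1} and \ref{theorem3}.
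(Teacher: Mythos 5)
Your proof is correct and follows exactly the paper's own argument: the paper likewise writes $\prod_{j=1}^p(1+(-z)q^j)/\prod_{j=1}^p(1-zq^j)=1$, expands both factors via (\ref{gen1}) and (\ref{gen2}) with the sign $(-1)^m$ attached to the $Q$-series, and equates coefficients of $q^nz^m$. The only differences are cosmetic (labeling of the summation indices), so there is nothing to add.
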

\begin{proof}
\begin{equation}
 \frac{\prod_{j=1}^p(1+(-z)q^j)}{\prod_{j=1}^p(1-zq^j)} = 1
\end{equation}
\begin{equation}
\begin{split}
 & \sum_{n=0}^{\infty}\sum_{m=0}^{\infty} \delta_{n,0}\delta_{m,0} q^nz^m
  = (\sum_{n=0}^{\infty}\sum_{m=0}^{\infty}P(n,m,p)q^nz^m)(\sum_{n=0}^{\infty}\sum_{m=0}^{\infty}Q(n,m,p)(-1)^mq^nz^m) \\
 & = \sum_{n_1=0}^{\infty}\sum_{n_2=0}^{\infty}\sum_{m_1=0}^{\infty}\sum_{m_2=0}^{\infty}
   P(n_1,m_1,p)Q(n_2,m_2,p) (-1)^{m_2} q^{n_1+n_2} z^{m_1+m_2} \\
\end{split}
\end{equation}
The coefficients on both sides must be equal, so $n_1+n_2=n$ and $m_1+m_2=m$,
which is equivalent to $n_1=n-n_2$ and $m_1=m-m_2$, which gives the theorem. 
\end{proof}

\section{Some q-Binomial Summation Identities}

From theorems \ref{theorem1}, \ref{theorem3}, \ref{theorem6}, \ref{theorem7},
\ref{theorem8}, \ref{theorem9} and \ref{simple}
the following q-binomial summation identities are proved.
\begin{theorem}\label{theorem4}
\begin{equation}
 \sum_{k=0}^{\lfloor n/2\rfloor} q^{\textstyle\binom{n-2k}{2}} \gaussian{m+1}{n-2k} \gaussiansqr{m+k}{m} = \gaussian{m+n}{m}
\end{equation}
\end{theorem}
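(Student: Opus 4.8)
The plan is to derive this $q$-binomial identity from the combinatorial identity of Theorem~\ref{theorem1} by passing to the generating function in $n$ with the number of parts held fixed. First I would record the two one-variable generating functions obtained by extracting the coefficient of $z^m$ in \eqref{gen1} and \eqref{gen2} through the $q$-binomial theorem (equivalently, from \eqref{pstar}, \eqref{pnmpsum} and \eqref{qnmpeq} together with the Pascal-type recurrence $\gaussian{m+p}{m}=\gaussian{m+p-1}{m-1}+q^m\gaussian{m+p-1}{m}$ for \eqref{gaussdef}):
\[
 \sum_{n=0}^{\infty} P(n,m,p)\,q^n = q^{m}\gaussian{m+p-1}{m},
 \qquad
 \sum_{n=0}^{\infty} Q(n,m,p)\,q^n = q^{\binom{m+1}{2}}\gaussian{p}{m}.
\]

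Next I would multiply the identity of Theorem~\ref{theorem1} by $q^n$ and sum over $n\ge 0$. On the right-hand side the double sum factors: putting $n=2k+n'$, the sum over $k$ and $n'$ separates as $\bigl(\sum_{n'} Q(n',m-2l,p)q^{n'}\bigr)\bigl(\sum_{k} P(k,l,p)q^{2k}\bigr)$, where the second factor equals $q^{2l}\gaussiansqr{l+p-1}{l}$ by the first generating function above with $q$ replaced by $q^2$. Collecting the pieces gives
\[
 q^{m}\gaussian{m+p-1}{m}
 = \sum_{l=0}^{\lfloor m/2\rfloor} q^{\binom{m-2l+1}{2}+2l}\,\gaussian{p}{m-2l}\,\gaussiansqr{l+p-1}{l}.
\]

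Finally I would relabel $m\mapsto n$, $l\mapsto k$, $p\mapsto m+1$ and simplify. With $j=n-2k$ the elementary identity $\binom{j+1}{2}+2k=\binom{j}{2}+j+2k=\binom{j}{2}+n$ shows that every summand on the right carries an overall factor $q^{n}$, matching the $q^{n}$ produced on the left by $q^{m}\gaussian{m+p-1}{m}$; dividing both sides by $q^{n}$ and using the symmetry identity \eqref{symm} to rewrite $\gaussian{m+p-1}{m}$ as $\gaussian{m+n}{m}$ and $\gaussiansqr{l+p-1}{l}$ as $\gaussiansqr{m+k}{m}$ yields exactly the stated identity. The only delicate point is the bookkeeping of the $q$-powers, in particular the cancellation of the global $q^{n}$ via $\binom{j+1}{2}=\binom{j}{2}+j$; the rest is routine. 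An essentially equivalent route would be to extract the coefficient of $z^m$ directly from the product identity $1/\prod_{j=1}^p(1-zq^j)=\prod_{j=1}^p(1+zq^j)/\prod_{j=1}^p(1-z^2q^{2j})$ used in the proof of Theorem~\ref{theorem1}, applying the $q$-binomial theorem to each of the three products.
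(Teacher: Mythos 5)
Your proposal is correct and follows essentially the same route as the paper: both start from Theorem~\ref{theorem1}, convert $P$ and $Q$ to $q$-binomial coefficients via (\ref{pnmpeq}), (\ref{qnmpeq}) and (\ref{pstar}), perform the convolution over $k$ to replace the inner sum by $\gaussiansqr{p+l-1}{l}$, and then relabel and cancel the common power $q^n$ using $\binom{j+1}{2}=\binom{j}{2}+j$ before applying (\ref{symm}). Your phrasing via the one-variable generating functions $\sum_n P(n,m,p)q^n=q^m\gaussian{m+p-1}{m}$ and $\sum_n Q(n,m,p)q^n=q^{\binom{m+1}{2}}\gaussian{p}{m}$ is just the summed form of the paper's coefficient-extraction argument, and the bookkeeping of $q$-powers checks out.
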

\begin{proof}
From theorem \ref{theorem1} using (\ref{pnmpeq}) and (\ref{qnmpeq}):
\begin{equation}
 P^*(n-m,m,p-1) = \sum_{k=0}^{\lfloor n/2\rfloor}\sum_{l=0}^{\lfloor m/2\rfloor}
  P^*(n-2k-(m-2l)(m-2l+1)/2,m-2l,p-m+2l)P^*(k-l,l,p-1)
\end{equation}
Using (\ref{pstar}):
\begin{equation}
\begin{split}
 [q^{n-m}]\gaussian{m+p-1}{m} & = \sum_{k=0}^{\lfloor n/2\rfloor}\sum_{l=0}^{\lfloor m/2\rfloor} 
 [q^{n-(m-2l)(m-2l+1)/2-2k}]\gaussian{p}{m-2l} \cdot [q^{k-l}]\gaussian{p+l-1}{l} \\
   & = \sum_{k=0}^{\lfloor n/2\rfloor}\sum_{l=0}^{\lfloor m/2\rfloor} 
 [q^{n-2k}]q^{(m-2l)(m-2l+1)/2}\gaussian{p}{m-2l} \cdot [q^k]q^l\gaussian{p+l-1}{l} \\
\end{split}
\end{equation}
In theorem \ref{theorem1} it was used that:
\begin{equation}
 \sum_{k=0}^{\lfloor n/2\rfloor} a_{n-2k}b_k = [q^n] (\sum_{k=0}^{\infty} a_kq^k)(\sum_{k=0}^{\infty}b_kq^{2k})
\end{equation}
so the summation over $k$ can be done:
\begin{equation}
 [q^n]q^m\gaussian{m+p-1}{m} = [q^n] \sum_{l=0}^{\lfloor m/2\rfloor} 
  q^{(m-2l)(m-2l+1)/2}\gaussian{p}{m-2l} q^{2l}\gaussiansqr{p+l-1}{l}
\end{equation}
Because all coefficients $[q^n]$ are equal, the polynomials must be equal,
and cancelling some powers of $q$:
\begin{equation}
 \gaussian{m+p-1}{m} = \sum_{l=0}^{\lfloor m/2\rfloor} 
   q^{(m-2l)(m-2l-1)/2}\gaussian{p}{m-2l} \gaussiansqr{p+l-1}{l} 
\end{equation}
Replacing $m$ by $n$ and $l$ by $k$ and $p$ by $m+1$
and using (\ref{symm}) gives the theorem.
\end{proof}
Taking $q=1$ and replacing $n$ by $2n$ and $k$ by $n-k$ this is combinatorial identity (3.24) in \cite{G72}:
\begin{equation}
 \sum_{k=0}^n \binom{m+1}{2k} \binom{m+n-k}{m} = \binom{m+2n}{m}
\end{equation}
and replacing $n$ by $2n+1$ and $k$ by $n-k$ this is combinatorial identity (3.25) in \cite{G72}
\begin{equation}
 \sum_{k=0}^n \binom{m+1}{2k+1} \binom{m+n-k}{m} = \binom{m+2n+1}{m}
\end{equation}
\begin{theorem}
\begin{equation}
 \sum_{k=0}^{\lfloor n/2\rfloor} (-1)^k q^{2\textstyle\binom{k}{2}} \gaussian{m+n-2k}{m} \gaussiansqr{m+1}{k} 
  = q^{\textstyle\binom{n}{2}}\gaussian{m+1}{n}
\end{equation}
\end{theorem}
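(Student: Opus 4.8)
\noindent
The plan is to imitate the proof of Theorem \ref{theorem4}, but starting from Theorem \ref{theorem3} in place of Theorem \ref{theorem1}. First I would rewrite Theorem \ref{theorem3} entirely in terms of the ``boxed'' counts $P^*$ by applying the two conversion formulas (\ref{qnmpeq}) and (\ref{pnmpeq}): one gets $Q(n,m,p)=P(n-m(m-1)/2,m,p-m+1)=P^*(n-m(m+1)/2,m,p-m)$, and likewise $P(n-2k,m-2l,p)=P^*(n-2k-(m-2l),m-2l,p-1)$ and $Q(k,l,p)=P^*(k-l(l+1)/2,l,p-l)$. Then I would pass to generating functions via (\ref{pstar}), turning each $P^*$ of argument $j$ into $[q^j]$ of the appropriate $\gaussian{\cdot}{\cdot}$.

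Next, exactly as in the proofs of Theorems \ref{theorem1} and \ref{theorem4}, I would perform the inner summation over $k$ using the identity $\sum_{k\geq 0} a_{n-2k}b_k=[q^n]\bigl(\sum_k a_kq^k\bigr)\bigl(\sum_k b_kq^{2k}\bigr)$. After pulling the shift powers $q^{m-2l}$ and $q^{l(l+1)/2}$ outside the coefficient extractions, the $b$-series undergoes $q\mapsto q^2$, which produces the $q^2$-binomial $\gaussiansqr{p}{l}$ together with the power $q^{l(l+1)}$. Since the resulting $[q^n]$-identity holds for every $n$, the underlying polynomials in $q$ are equal, giving
\[
 q^{m(m+1)/2}\gaussian{p}{m}=\sum_{l=0}^{\lfloor m/2\rfloor}(-1)^l q^{m-2l+l(l+1)}\gaussian{m-2l+p-1}{m-2l}\gaussiansqr{p}{l}.
\]

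Finally I would relabel $m\to n$, $l\to k$, $p\to m+1$, apply the symmetry (\ref{symm}) to replace $\gaussian{n-2k+m}{n-2k}$ by $\gaussian{m+n-2k}{m}$, and divide both sides by $q^n$: the exponent on the left drops from $n(n+1)/2$ to $\binom{n}{2}$, and each exponent on the right drops from $n-2k+k(k+1)$ to $k(k-1)=2\binom{k}{2}$, which is exactly the claimed identity. The substantive content is just the bookkeeping of parameter shifts and $q$-exponents; I expect the step most prone to error is the very first one, carrying out the three applications of (\ref{qnmpeq})/(\ref{pnmpeq}) correctly — in particular the combined shifts $m(m-1)/2+m=m(m+1)/2$ and $l(l-1)/2+l=l(l+1)/2$ — and checking that $P^*$'s with out-of-range parameters vanish in step with the corresponding $q$-binomials being zero, so that no boundary terms are lost or spuriously introduced.
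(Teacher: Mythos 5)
Your proposal is correct and follows essentially the same route as the paper's own proof: convert Theorem \ref{theorem3} to $P^*$ form via (\ref{qnmpeq}) and (\ref{pnmpeq}), extract coefficients with (\ref{pstar}), collapse the $k$-sum with the $q\mapsto q^2$ substitution, equate polynomials, and relabel. All the parameter shifts and $q$-exponents you record match the paper's intermediate identity $q^{m(m-1)/2}\gaussian{p}{m}=\sum_l(-1)^l q^{l(l-1)}\gaussian{m+p-2l-1}{p-1}\gaussiansqr{p}{l}$ after cancelling $q^m$.
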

\begin{proof}
From theorem \ref{theorem3} using (\ref{pnmpeq}) and (\ref{qnmpeq}):
\begin{equation}
\begin{split}
 & P^*(n-m(m+1)/2,m,p-m) \\
 & = \sum_{k=0}^{\lfloor n/2\rfloor}\sum_{l=0}^{\lfloor m/2\rfloor}
  (-1)^l P^*(n-2k-m+2l,m-2l,p-1)P^*(k-l(l+1)/2,l,p-l) \\
\end{split}
\end{equation}
Using (\ref{pstar}) and (\ref{symm}):
\begin{equation}
\begin{split}
 [q^{n-m(m+1)/2}]\gaussian{p}{m} & = \sum_{k=0}^{\lfloor n/2\rfloor}\sum_{l=0}^{\lfloor m/2\rfloor} 
 (-1)^l [q^{n-m-2k+2l}]\gaussian{m+p-2l-1}{m-2l} \cdot [q^{k-l(l+1)/2}]\gaussian{p}{l} \\
   & = \sum_{k=0}^{\lfloor n/2\rfloor}\sum_{l=0}^{\lfloor m/2\rfloor} 
 (-1)^l [q^{n-2k}]q^{m-2l}\gaussian{m+p-2l-1}{p-1} \cdot [q^k]q^{l(l+1)/2}\gaussian{p}{l} \\
\end{split}
\end{equation}
As in theorem \ref{theorem4} the sum over $k$ can be done:
\begin{equation}
 [q^n]q^{m(m+1)/2}\gaussian{p}{m} = [q^n] \sum_{l=0}^{\lfloor m/2\rfloor} 
  (-1)^l q^{m-2l}\gaussian{m+p-2l-1}{p-1} q^{l(l+1)}\gaussiansqr{p}{l}
\end{equation}
Because all coefficients $[q^n]$ are equal, the polynomials must be equal,
and cancelling some powers of $q$:
\begin{equation}
 q^{m(m-1)/2}\gaussian{p}{m} = \sum_{l=0}^{\lfloor m/2\rfloor} 
  (-1)^l q^{l(l-1)}\gaussian{m+p-2l-1}{p-1} \gaussiansqr{p}{l}
\end{equation}
Replacing $m$ by $n$ and $l$ by $k$ and $p$ by $m+1$ gives the theorem.
\end{proof}
Taking $q=1$ and replacing $n$ by $2n$ and $k$ by $n-k$ gives the combinatorial identity:
\begin{equation}
 \sum_{k=0}^n (-1)^k \binom{m+2k}{m} \binom{m+1}{n-k} = (-1)^n \binom{m+1}{2n}
\end{equation}
and replacing $n$ by $2n+1$ and $k$ by $n-k$ gives the combinatorial identity:
\begin{equation}
 \sum_{k=0}^n (-1)^k \binom{m+2k+1}{m} \binom{m+1}{n-k} = (-1)^n \binom{m+1}{2n+1}
\end{equation}
\begin{theorem}
\begin{equation}
  \sum_{k=0}^{\lfloor n/3\rfloor} (-1)^k q^{\textstyle\binom{n-3k}{2}} \gaussian{m+1}{n-3k} \gaussiancube{m+k}{m} 
   = \sum_{k=0}^n \cos(\frac{2k-n}{3}\pi) \gaussian{m+n-k}{m} \gaussian{m+k}{m} 
\end{equation}
\end{theorem}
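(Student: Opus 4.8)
The plan is to derive this identity from Theorem \ref{theorem6} by exactly the route that took Theorem \ref{theorem1} to Theorem \ref{theorem4}. First I would rewrite every occurrence of $P$ and $Q$ in Theorem \ref{theorem6} as a $P^*$: by (\ref{pnmpeq}) the factor $P(k,l,p)$ equals $P^*(k-l,l,p-1)$ and $P(n-k,m-l,p)$ equals $P^*(n-k-m+l,\,m-l,\,p-1)$, while $Q(n-3k,m-3l,p)$ equals, by (\ref{qnmpeq}) followed by (\ref{pnmpeq}), the quantity $P^*\bigl(n-3k-(m-3l)(m-3l+1)/2,\ m-3l,\ p-m+3l\bigr)$, whose two lower arguments sum to $p$. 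Then (\ref{pstar}) converts each $P^*$ into a coefficient $[q^{\cdot}]$ of a Gaussian polynomial; pulling suitable powers of $q$ out of the brackets so the first factor on each side is extracted at $[q^{n-3k}]$ resp.\ $[q^{n-k}]$ and the second at $[q^k]$, the left side of Theorem \ref{theorem6} becomes
\[
 \sum_{k=0}^{\lfloor n/3\rfloor}\sum_{l=0}^{\lfloor m/3\rfloor}(-1)^l\,[q^{n-3k}]\!\left(q^{(m-3l)(m-3l+1)/2}\gaussian{p}{m-3l}\right)[q^k]\!\left(q^l\gaussian{p+l-1}{l}\right)
\]
and the right side becomes $\sum_{k=0}^n\sum_{l=0}^m\cos(\frac{2l-m}{3}\pi)\,[q^{n-k}]\bigl(q^{m-l}\gaussian{m-l+p-1}{m-l}\bigr)[q^k]\bigl(q^l\gaussian{l+p-1}{l}\bigr)$.

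Next I would carry out the inner sum over $k$ on each side. On the left, the step-$3$ analogue of the coefficient identity used in the proof of Theorem \ref{theorem1}, namely $\sum_{k=0}^{\lfloor n/3\rfloor}a_{n-3k}b_k=[q^n](\sum a_kq^k)(\sum b_kq^{3k})$, replaces $q$ by $q^3$ in the second Gaussian polynomial and sends $q^l$ to $q^{3l}$; since $(m-3l)(m-3l+1)/2=\binom{m-3l}{2}+(m-3l)$, the accumulated exponent $\binom{m-3l}{2}+(m-3l)+3l$ collapses to $\binom{m-3l}{2}+m$, so a global $q^m$ factors out. On the right the ordinary convolution $\sum_{k=0}^n a_{n-k}b_k=[q^n](\sum a_kq^k)(\sum b_kq^k)$ likewise leaves a global $q^m$ together with $\gaussian{m-l+p-1}{m-l}\gaussian{l+p-1}{l}$. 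Both sides are now $[q^n]$ of polynomials and the equality holds for all $n$, hence the polynomials coincide; cancelling the common $q^m$ gives
\[
 \sum_{l=0}^{\lfloor m/3\rfloor}(-1)^l q^{\binom{m-3l}{2}}\gaussian{p}{m-3l}\gaussiancube{p+l-1}{l}=\sum_{l=0}^m\cos\!\left(\tfrac{2l-m}{3}\pi\right)\gaussian{m-l+p-1}{m-l}\gaussian{l+p-1}{l}.
\]

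Finally I would replace $m$ by $n$, $l$ by $k$ and $p$ by $m+1$, and apply the symmetry identity (\ref{symm}): on the left $\gaussiancube{m+k}{k}=\gaussiancube{m+k}{m}$, and on the right $\gaussian{m+n-k}{n-k}=\gaussian{m+n-k}{m}$ together with $\gaussian{m+k}{k}=\gaussian{m+k}{m}$, which turns the displayed identity into exactly the statement to be proved. I expect the only real obstacle to be the exponent bookkeeping in the middle step: one must track that the $q^l$ riding along with the second factor becomes $q^{3l}$ (not $q^l$) under $q\mapsto q^3$, and that this combines with the triangular exponent $q^{(m-3l)(m-3l+1)/2}$ to produce precisely $q^{\binom{m-3l}{2}}$ times a global $q^m$ that matches the $q^m$ emerging on the right; everything else is the same mechanical computation already performed for Theorem \ref{theorem4}.
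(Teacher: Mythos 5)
Your proposal is correct and follows essentially the same route as the paper: converting Theorem \ref{theorem6} to $P^*$ via (\ref{pnmpeq}) and (\ref{qnmpeq}), passing to coefficients of Gaussian polynomials via (\ref{pstar}), performing the step-$3$ convolution over $k$ (which replaces $q$ by $q^3$ in the second factor), cancelling the common $q^m$, and substituting $m\to n$, $l\to k$, $p\to m+1$ with the symmetry (\ref{symm}). Your exponent bookkeeping, in particular $(m-3l)(m-3l+1)/2+3l=\binom{m-3l}{2}+m$, matches the paper's computation exactly (and even silently fixes a sign typo in the paper's first display of that proof).
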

\begin{proof}
From theorem \ref{theorem6} using (\ref{pnmpeq}) and (\ref{qnmpeq}):
\begin{equation}
\begin{split}
 & \sum_{k=0}^{\lfloor n/3\rfloor}\sum_{l=0}^{\lfloor m/3\rfloor} (-1)^l P^*(n-3k-(m-3l)(m-3l-1)/2,m-3l,p-m+3l) P^*(k-l,l,p-1) \\
 & = \sum_{k=0}^n \sum_{l=0}^m \cos(\frac{2l-m}{3}\pi) P^*(n-k-m+l,m-l,p-1) P^*(k-l,l,p-1) \\
\end{split}
\end{equation}
Using (\ref{pstar}) and (\ref{symm}):
\begin{equation}
\begin{split}
 & \sum_{k=0}^{\lfloor n/3\rfloor}\sum_{l=0}^{\lfloor m/3\rfloor} (-1)^l [q^{n-3k}]q^{(m-3l)(m-3l+1)/2} \gaussian{p}{m-3l}
 \cdot [q^k] q^l \gaussian{p+l-1}{p-1} \\
 & = \sum_{k=0}^n \sum_{l=0}^m \cos(\frac{2l-m}{3}\pi) [q^{n-k}] q^{m-l} \gaussian{p+m-l-1}{p-1} \cdot [q^k] q^l \gaussian{p+l-1}{p-1} \\
\end{split}
\end{equation}
In theorem \ref{theorem6} it was used that:
\begin{equation}
 \sum_{k=0}^{\lfloor n/3\rfloor} a_{n-3k}b_k = [q^n] (\sum_{k=0}^{\infty} a_kq^k)(\sum_{k=0}^{\infty}b_kq^{3k})
\end{equation}
so as in the previous theorems the summation over $k$ can be done:
\begin{equation}
\begin{split}
 & \sum_{l=0}^{\lfloor m/3\rfloor} (-1)^l q^{(m-3l)(m-3l+1)/2} \gaussian{p}{m-3l} q^{3l} \gaussiancube{p+l-1}{p-1} \\
 & = \sum_{l=0}^m \cos(\frac{2l-m}{3}\pi) q^{m-l} \gaussian{p+m-l-1}{p-1} q^l \gaussian{p+l-1}{p-1} \\
\end{split} 
\end{equation}
Cancelling some powers of $q$ and replacing $m$ by $n$ and $l$ by $k$ and $p$ by $m+1$ gives the theorem.
\end{proof}
Taking $q=1$ and replacing $n$ by $3n$ and in the left side
replacing $k$ by $n-k$ and using $\cos(\alpha-n\pi)=(-1)^n\cos(\alpha)$ gives the combinatorial identity:
\begin{equation}
 \sum_{k=0}^n (-1)^k \binom{m+1}{3k} \binom{m+n-k}{m} = \sum_{k=0}^{3n} \cos(\frac{2k}{3}\pi)\binom{m+3n-k}{m}\binom{m+k}{m}
\end{equation}
Replacing $n$ by $3n+1$ or $3n+2$ gives similar identities.
\begin{theorem}
\begin{equation}
  \sum_{k=0}^{\lfloor n/3\rfloor} (-1)^k q^{3\textstyle\binom{k}{2}} \gaussian{m+n-3k}{m} \gaussiancube{m+1}{k} 
  = \sum_{k=0}^n \cos(\frac{2k-n}{3}\pi) q^{{\textstyle\binom{n-k}{2}}+\textstyle\binom{k}{2}} \gaussian{m+1}{n-k} \gaussian{m+1}{k} 
\end{equation}
\end{theorem}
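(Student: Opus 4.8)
The plan is to imitate the proof of the preceding theorem, replacing Theorem~\ref{theorem6} by Theorem~\ref{theorem7}: there the "outer" and "inner" counters were $Q$ and $P$, whereas here they are $P$ and $Q$, so the bookkeeping of $q$-powers changes slightly. First I would express each of the four partition counters in Theorem~\ref{theorem7} as a coefficient of a $q$-binomial coefficient. Combining (\ref{pnmpeq}) in the rearranged form $P(a,M,P)=P^*(a-M,M,P-1)$ with (\ref{pstar}) and the symmetry (\ref{symm}) turns $P(n-3k,m-3l,p)$ into $[q^{n-3k}]\,q^{m-3l}\gaussian{m-3l+p-1}{p-1}$. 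Composing (\ref{qnmpeq}) with (\ref{pnmpeq}) and (\ref{pstar}), where the two shifts $l(l-1)/2$ and $l$ add up to $l(l+1)/2$, turns $Q(k,l,p)$ into $[q^{k}]\,q^{l(l+1)/2}\gaussian{p}{l}$ and, likewise, $Q(n-k,m-l,p)$ into $[q^{n-k}]\,q^{(m-l)(m-l+1)/2}\gaussian{p}{m-l}$.

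Next, as in the earlier theorems, I would carry out the summation over the outer index $k$. On the left side, for each fixed $l$, the identity $\sum_{k=0}^{\lfloor n/3\rfloor}a_{n-3k}b_k=[q^n]\bigl(\sum_j a_jq^j\bigr)\bigl(\sum_j b_jq^{3j}\bigr)$ replaces the $k$-dependent factor $q^{l(l+1)/2}\gaussian{p}{l}$ by its $q\mapsto q^3$ image $q^{3l(l+1)/2}\gaussiancube{p}{l}$. On the right side the analogous formula $\sum_{k=0}^{n}a_{n-k}b_k=[q^n]\bigl(\sum_j a_jq^j\bigr)\bigl(\sum_j b_jq^{j}\bigr)$ is used, and the factor $\cos(\frac{2l-m}{3}\pi)$, being independent of $q$, passes freely through the coefficient extraction. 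Since the resulting equality of $[q^n]$ holds for every $n$, the two polynomials in $q$ coincide:
\[
 \sum_{l=0}^{\lfloor m/3\rfloor}(-1)^l q^{\,m-3l+3l(l+1)/2}\gaussian{m-3l+p-1}{p-1}\gaussiancube{p}{l}
 =\sum_{l=0}^{m}\cos(\frac{2l-m}{3}\pi)\,q^{\,(m-l)(m-l+1)/2+l(l+1)/2}\gaussian{p}{m-l}\gaussian{p}{l}.
\]

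Finally I would simplify the exponents: $m-3l+3l(l+1)/2=m+3\binom{l}{2}$ and $(m-l)(m-l+1)/2+l(l+1)/2=\binom{m-l}{2}+\binom{l}{2}+m$, so that $q^m$ is a common factor which cancels. Replacing $m$ by $n$, $l$ by $k$ and $p$ by $m+1$ (and writing $\gaussian{m+n-3k}{m}$ for $\gaussian{n-3k+m}{m}$) then gives exactly the stated identity.

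The main obstacle will be the bookkeeping rather than any genuine idea: one must compose (\ref{qnmpeq}) with (\ref{pnmpeq}) so that the correct shift $l(l+1)/2$ (not $l(l-1)/2$) appears, keep the three $q$-powers straight under the $q\mapsto q^3$ substitution, and verify that after cancelling $q^m$ the exponents collapse to $3\binom{k}{2}$ on the left and $\binom{n-k}{2}+\binom{k}{2}$ on the right. A single off-by-one in a binomial shift would corrupt these final exponents, so this step needs to be carried out with care.
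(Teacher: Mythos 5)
Your proposal is correct and follows essentially the same route as the paper: apply Theorem~\ref{theorem7}, rewrite each counter via (\ref{pnmpeq}), (\ref{qnmpeq}), (\ref{pstar}) and (\ref{symm}) as a coefficient of a shifted $q$-binomial, perform the $k$-summation with the $q\mapsto q^3$ substitution, and cancel the common factor $q^m$ before the final relabeling. The exponent bookkeeping you describe ($m-3l+3l(l+1)/2=m+3\binom{l}{2}$ and $(m-l)(m-l+1)/2+l(l+1)/2=\binom{m-l}{2}+\binom{l}{2}+m$) checks out and matches the paper's "cancelling some powers of $q$" step.
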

\begin{proof}
From theorem \ref{theorem7} using (\ref{pnmpeq}) and (\ref{qnmpeq}):
\begin{equation}
\begin{split}
 & \sum_{k=0}^{\lfloor n/3\rfloor}\sum_{l=0}^{\lfloor m/3\rfloor} (-1)^l P^*(n-3k-m+3l,m-3l,p-1) P^*(k-l(l+1)/2,l,p-l) \\
 & = \sum_{k=0}^n \sum_{l=0}^m \cos(\frac{2l-m}{3}\pi) P^*(n-k-(m-l)(m-l+1)/2,m-l,p-m+l) \\
 & \qquad\qquad\qquad \cdot P^*(k-l(l+1)/2,l,p-l) \\
\end{split}
\end{equation}
Using (\ref{pstar}) and (\ref{symm}):
\begin{equation}
\begin{split}
 & \sum_{k=0}^{\lfloor n/3\rfloor}\sum_{l=0}^{\lfloor m/3\rfloor} (-1)^l [q^{n-3k}]q^{m-3l} \gaussian{m+p-3l-1}{p-1}
 \cdot [q^k] q^{l(l+1)/2} \gaussian{p}{l} \\
 & = \sum_{k=0}^n \sum_{l=0}^m \cos(\frac{2l-m}{3}\pi) [q^{n-k}] q^{(m-l)(m-l+1)/2} \gaussian{p}{m-l} 
 \cdot [q^k] q^{l(l+1)/2} \gaussian{p}{l} \\
\end{split}
\end{equation}
As in the previous theorem the summation over $k$ can be done:
\begin{equation}
\begin{split}
 & \sum_{l=0}^{\lfloor m/3\rfloor} (-1)^l q^{m-3l} \gaussian{m+p-3l-1}{p-1} q^{3l(l+1)/2} \gaussiancube{p}{l} \\
 & = \sum_{l=0}^m \cos(\frac{2l-m}{3}\pi) q^{(m-l)(m-l+1)/2} \gaussian{p}{m-l} q^{l(l+1)/2} \gaussian{p}{l} \\
\end{split} 
\end{equation}
Cancelling some powers of $q$ 
and replacing $m$ by $n$ and $l$ by $k$ and $p$ by $m+1$ gives the theorem.
\end{proof}
Taking $q=1$ and replacing $n$ by $3n$ and in the left side
replacing $k$ by $n-k$ and using $\cos(\alpha-n\pi)=(-1)^n\cos(\alpha)$ gives the combinatorial identity:
\begin{equation}
 \sum_{k=0}^n (-1)^k \binom{m+3k}{m} \binom{m+1}{n-k} = \sum_{k=0}^{3n} \cos(\frac{2k}{3}\pi) \binom{m+1}{3n-k}\binom{m+1}{k}
\end{equation}
Replacing $n$ by $3n+1$ or $3n+2$ gives similar identities.
\begin{theorem}
\begin{equation}
 \sum_{k=0}^{\lfloor n/4\rfloor} q^{\textstyle\binom{n-4k}{2}} \gaussian{m+1}{n-4k} \gaussianquart{m+k}{m}
 = \sum_{k=0}^{\lfloor n/2\rfloor} (-1)^k \gaussian{m+n-2k}{m} \gaussiansqr{m+k}{m}
\end{equation}
\end{theorem}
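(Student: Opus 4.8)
The plan is to follow the template used for Theorem~\ref{theorem4} and the subsequent $q$-binomial identities, now starting from Theorem~\ref{theorem8}. First I would rewrite each $P$ and $Q$ in that theorem as a $P^*$, using (\ref{qnmpeq}) together with (\ref{pnmpeq}) in the reversed form $P(N,M,P)=P^*(N-M,M,P-1)$. The only point needing care is the first factor on the left, where the two successive shifts combine: since $(m-4l)(m-4l-1)/2+(m-4l)=(m-4l)(m-4l+1)/2$, one gets
\begin{equation*}
 Q(n-4k,m-4l,p)=P^*\!\Big(n-4k-\tfrac{(m-4l)(m-4l+1)}{2},\,m-4l,\,p-m+4l\Big),
\end{equation*}
whereas $P(k,l,p)=P^*(k-l,l,p-1)$ on the left and $P(n-2k,m-2l,p)=P^*(n-2k-m+2l,\,m-2l,\,p-1)$, $P(k,l,p)=P^*(k-l,l,p-1)$ on the right.

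Next I would apply (\ref{pstar}) to turn every $P^*$ into a coefficient $[q^{\cdot}]$ of a Gaussian polynomial, pull the linear shift out of the coefficient extractor via $[q^{N-a}]f(q)=[q^N]\,q^a f(q)$, and simplify the binomials with the symmetry (\ref{symm}), exactly as in the proof of Theorem~\ref{theorem4}. For each fixed $l$ the inner sum over $k$ then has the shape $\sum_k\big([q^{\,n-4k}]A_l\big)\big([q^{k}]B_l\big)$ on the left and $\sum_k\big([q^{\,n-2k}]A'_l\big)\big([q^{k}]B'_l\big)$ on the right. Using the $4$-fold and $2$-fold versions of the observation already invoked in Theorems~\ref{theorem1} and~\ref{theorem6}, namely $\sum_k a_{n-4k}b_k=[q^n]\big(\sum_k a_k q^k\big)\big(\sum_k b_k q^{4k}\big)$ and the analogue with $q^{2k}$, these $k$-sums can be carried out; this is exactly what converts the second $q$-binomial on the left into a $q^4$-binomial and the one on the right into a $q^2$-binomial.

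After the $k$-summations both sides equal $[q^n]$ of explicit polynomials in $q$, and since these coefficients agree for every $n$ the polynomials themselves are equal. On the left the accumulated $q$-exponent is $(m-4l)(m-4l+1)/2+4l=(m-4l)(m-4l-1)/2+m$ and on the right it is $m-2l+2l=m$, so a common factor $q^m$ cancels; relabelling $m\mapsto n$, $l\mapsto k$, $p\mapsto m+1$ and using (\ref{symm}) once more (to rewrite $\gaussian{n+m-2k}{n-2k}$ as $\gaussian{m+n-2k}{m}$) then yields the stated identity, with $(n-4k)(n-4k-1)/2=\binom{n-4k}{2}$. I do not expect a genuine obstacle here; the argument is entirely bookkeeping, and the only things to watch are the arithmetic of the $q$-exponents and keeping straight which instance of (\ref{symm}) is being used, just as in the preceding theorems. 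As usual, setting $q=1$ and taking $n$ even or odd recovers the corresponding combinatorial identities listed in the introduction.
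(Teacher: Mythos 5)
Your proposal is correct and follows essentially the same route as the paper's own proof: convert Theorem~\ref{theorem8} to $P^*$ via (\ref{pnmpeq}) and (\ref{qnmpeq}), pass to coefficients of $q$-binomials with (\ref{pstar}) and (\ref{symm}), perform the $k$-summation with the $4$-fold and $2$-fold convolution identities, cancel $q^m$, and relabel. The exponent bookkeeping you give, including $(m-4l)(m-4l+1)/2+4l=(m-4l)(m-4l-1)/2+m$, matches the paper exactly.
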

\begin{proof}
From theorem \ref{theorem8} using (\ref{pnmpeq}) and (\ref{qnmpeq}):
\begin{equation}
\begin{split}
 & \sum_{k=0}^{\lfloor n/4\rfloor}\sum_{l=0}^{\lfloor m/4\rfloor} P^*(n-4k-(m-4l)(m-4l+1)/2,m-4l,p-m+4l) P^*(k-l,l,p-1) \\
 & = \sum_{k=0}^{\lfloor n/2\rfloor} \sum_{l=0}^{\lfloor m/2\rfloor} (-1)^l P^*(n-2k-m+2l,m-2l,p-1) P^*(k-l,l,p-1) \\
\end{split}
\end{equation}
Using (\ref{pstar}) and (\ref{symm}):
\begin{equation}
\begin{split}
 & \sum_{k=0}^{\lfloor n/4\rfloor}\sum_{l=0}^{\lfloor m/4\rfloor} [q^{n-4k}] q^{(m-4l)(m-4l+1)/2} \gaussian{p}{m-4l}
  \cdot [q^k] q^l \gaussian{p+l-1}{p-1} \\
 & = \sum_{k=0}^{\lfloor n/2\rfloor} \sum_{l=0}^{\lfloor m/2\rfloor} (-1)^l [q^{n-2k}] q^{m-2l} \gaussian{m+p-2l-1}{p-1}
  \cdot [q^k] q^l \gaussian{p+l-1}{p-1} \\
\end{split}
\end{equation}
As in the previous theorem the summation over $k$ can be done:
\begin{equation}
\begin{split}
 & \sum_{k=0}^{\lfloor n/4\rfloor}\sum_{l=0}^{\lfloor m/4\rfloor} q^{(m-4l)(m-4l+1)/2} \gaussian{p}{m-4l}
  q^{4l} \gaussianquart{p+l-1}{p-1} \\
 & = \sum_{k=0}^{\lfloor n/2\rfloor} \sum_{l=0}^{\lfloor m/2\rfloor} (-1)^l q^{m-2l} \gaussian{m+p-2l-1}{p-1}
  q^{2l} \gaussiansqr{p+l-1}{p-1} \\
\end{split}
\end{equation}
Cancelling some powers of $q$
and replacing $m$ by $n$ and $l$ by $k$ and $p$ by $m+1$ gives the theorem.
\end{proof}
Taking $q=1$ and replacing $n$ by $4n$ and in the left side
replacing $k$ by $n-k$ and in the right side $k$ by $2n-k$ gives the combinatorial identity:
\begin{equation}
 \sum_{k=0}^n \binom{m+1}{4k} \binom{m+n-k}{m} = \sum_{k=0}^{2n} (-1)^k \binom{m+2k}{m} \binom{m+2n-k}{m}
\end{equation}
Replacing $n$ by $4n+1$ or $4n+2$ or $4n+3$ gives similar identities.
\begin{theorem}
\begin{equation}
 \sum_{k=0}^{\lfloor n/4\rfloor} (-1)^k q^{4\textstyle\binom{k}{2}} \gaussian{m+n-4k}{m} \gaussianquart{m+1}{k}
 = \sum_{k=0}^{\lfloor n/2\rfloor} q^{{\textstyle\binom{n-2k}{2}}+2{\textstyle\binom{k}{2}}} \gaussian{m+1}{n-2k} \gaussiansqr{m+1}{k}
\end{equation}
\end{theorem}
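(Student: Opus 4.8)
The plan is to mirror the derivations of the preceding theorems, this time starting from Theorem~\ref{theorem9}. First I would express every $P$ and $Q$ appearing there through $P^{*}$, using (\ref{pnmpeq}) in the form $P(a,b,c)=P^{*}(a-b,b,c-1)$ and (\ref{qnmpeq}) in the form $Q(a,b,c)=P^{*}\!\left(a-\tfrac{b(b+1)}{2},b,c-b\right)$. On the left side this rewrites $P(n-4k,m-4l,p)$ as $P^{*}(n-m-4k+4l,\,m-4l,\,p-1)$ and $Q(k,l,p)$ as $P^{*}\!\left(k-\tfrac{l(l+1)}{2},\,l,\,p-l\right)$, while on the right side both $Q(n-2k,m-2l,p)$ and $Q(k,l,p)$ acquire the same $P^{*}$ shape. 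Then, applying (\ref{pstar}) and the symmetry (\ref{symm}), each $P^{*}$ becomes a coefficient $[q^{\,\cdot}]$ of a $q$-binomial coefficient, the left ones being $\gaussian{m+p-4l-1}{p-1}$ and $\gaussian{p}{l}$, the right ones $\gaussian{p}{m-2l}$ and $\gaussian{p}{l}$.

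Next, exactly as in the earlier proofs, I would pull the relevant powers of $q$ out of the coefficient brackets so that on the left the two extractions read $[q^{\,n-4k}]$ and $[q^{\,k}]$, and on the right $[q^{\,n-2k}]$ and $[q^{\,k}]$; this introduces shift factors $q^{m-4l}$ and $q^{l(l+1)/2}$ on the left, and $q^{(m-2l)(m-2l+1)/2}$ and $q^{l(l+1)/2}$ on the right. The inner sum over $k$ is then carried out with the same generating-function identities used in the preceding proofs, namely $\sum_{k=0}^{\lfloor n/4\rfloor} a_{n-4k}b_k = [q^{n}]\bigl(\sum_k a_kq^{k}\bigr)\bigl(\sum_k b_kq^{4k}\bigr)$ and $\sum_{k=0}^{\lfloor n/2\rfloor} a_{n-2k}b_k = [q^{n}]\bigl(\sum_k a_kq^{k}\bigr)\bigl(\sum_k b_kq^{2k}\bigr)$. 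The substitution $q\mapsto q^{4}$ in the second factor on the left turns $q^{l(l+1)/2}\gaussian{p}{l}$ into $q^{2l(l+1)}\gaussianquart{p}{l}$, and $q\mapsto q^{2}$ on the right turns it into $q^{l(l+1)}\gaussiansqr{p}{l}$.

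After these steps both sides equal $[q^{n}]$ of an explicit polynomial in $q$, hence the two polynomials coincide. The remaining work is purely power-of-$q$ bookkeeping: the left summand carries exponent $m-4l+2l(l+1)=m+2l(l-1)$ and the right summand carries exponent $\tfrac{(m-2l)(m-2l+1)}{2}+l(l+1)=m+\binom{m-2l}{2}+l(l-1)$, so a common factor $q^{m}$ cancels, leaving $q^{4\binom{l}{2}}$ on the left and $q^{\binom{m-2l}{2}+2\binom{l}{2}}$ on the right. Renaming $m\to n$, $l\to k$, $p\to m+1$, and using (\ref{symm}) to write $\gaussian{m+p-4l-1}{p-1}$ as $\gaussian{m+n-4k}{m}$, then gives the stated identity. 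The only genuine obstacle is this last exponent reconciliation — confirming that the surplus powers of $q$ are exactly $4\binom{k}{2}$ and $\binom{n-2k}{2}+2\binom{k}{2}$ so that everything matches the claimed right-hand side — which is routine algebra but must be done carefully.
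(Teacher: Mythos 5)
Your proposal is correct and follows essentially the same route as the paper: starting from Theorem~\ref{theorem9}, converting $P$ and $Q$ to $P^{*}$ via (\ref{pnmpeq}) and (\ref{qnmpeq}), extracting coefficients with (\ref{pstar}) and (\ref{symm}), performing the inner $k$-sum with the $q\mapsto q^{4}$ and $q\mapsto q^{2}$ substitutions, and cancelling $q^{m}$ before the renaming $m\to n$, $l\to k$, $p\to m+1$. The exponent bookkeeping you flag as the remaining obstacle checks out exactly as you state ($m-4l+2l(l+1)=m+4\binom{l}{2}$ and $\tfrac{(m-2l)(m-2l+1)}{2}+l(l+1)=m+\binom{m-2l}{2}+2\binom{l}{2}$), matching the paper's final step.
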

\begin{proof}
From theorem \ref{theorem9} using (\ref{pnmpeq}) and (\ref{qnmpeq}):
\begin{equation}
\begin{split}
 & \sum_{k=0}^{\lfloor n/4\rfloor}\sum_{l=0}^{\lfloor m/4\rfloor} (-1)^l P^*(n-4k-m+4l,m-4l,p-1) P^*(k-l(l+1)/2,l,p-l) \\
 & = \sum_{k=0}^{\lfloor n/2\rfloor} \sum_{l=0}^{\lfloor m/2\rfloor} P^*(n-2k-(m-2l)(m-2l+1)/2,m-2l,p-m+2l) 
   P^*(k-l(l+1)/2,l,p-l) \\
\end{split}
\end{equation}
Using (\ref{pstar}) and (\ref{symm}):
\begin{equation}
\begin{split}
 & \sum_{k=0}^{\lfloor n/4\rfloor}\sum_{l=0}^{\lfloor m/4\rfloor} (-1)^l [q^{n-4k}] q^{m-4l} \gaussian{p+m-4l-1}{p-1}
  \cdot [q^k] q^{l(l+1)/2} \gaussian{p}{l} \\
 & = \sum_{k=0}^{\lfloor n/2\rfloor} \sum_{l=0}^{\lfloor m/2\rfloor} [q^{n-2k}] q^{(m-2l)(m-2l+1)/2} \gaussian{p}{m-2l}
  \cdot [q^k] q^{l(l+1)/2} \gaussian{p}{l} \\
\end{split}
\end{equation}
As in the previous theorem the summation over $k$ can be done:
\begin{equation}
\begin{split}
 & \sum_{k=0}^{\lfloor n/4\rfloor}\sum_{l=0}^{\lfloor m/4\rfloor} (-1)^l q^{m-4l} \gaussian{p+m-4l-1}{p-1}
  q^{2l(l+1)} \gaussianquart{p}{l} \\
 & = \sum_{k=0}^{\lfloor n/2\rfloor} \sum_{l=0}^{\lfloor m/2\rfloor} q^{(m-2l)(m-2l+1)/2} \gaussian{p}{m-2l}
  q^{l(l+1)} \gaussiansqr{p}{l} \\
\end{split}
\end{equation}
Cancelling some powers of $q$ 
and replacing $m$ by $n$ and $l$ by $k$ and $p$ by $m+1$ gives the theorem.
\end{proof}
Taking $q=1$ and replacing $n$ by $4n$ and in the left side
replacing $k$ by $n-k$ and in the right side $k$ by $2n-k$ gives the combinatorial identity:
\begin{equation}
 \sum_{k=0}^n (-1)^k \binom{m+4k}{m} \binom{m+1}{n-k} = (-1)^n \sum_{k=0}^{2n} \binom{m+1}{2k} \binom{m+1}{2n-k}
\end{equation}
Replacing $n$ by $4n+1$ or $4n+2$ or $4n+3$ gives similar identities.
\begin{theorem}\label{delta}
\begin{equation}
 \sum_{k=0}^n (-1)^k q^{\textstyle\binom{k}{2}} \gaussian{m+n-k}{m}\gaussian{m+1}{k} = \delta_{n,0}
\end{equation}
\end{theorem}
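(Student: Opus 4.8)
The plan is to deduce this identity from Theorem~\ref{simple} by the same route used for Theorem~\ref{theorem4} and the other results of this section: rewrite the partition functions appearing in Theorem~\ref{simple} as values of $P^*$, pass to coefficients of $q$-binomials via (\ref{pstar}), absorb the argument shifts into explicit powers of $q$, collapse the inner summation by a Cauchy product, and then equate polynomials.

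Concretely, I would first use (\ref{pnmpeq}) in the form $P(N,M,P)=P^*(N-M,M,P-1)$ to turn $P(n-k,m-l,p)$ into $P^*(n-k-m+l,\,m-l,\,p-1)$, and combine (\ref{qnmpeq}) with the same rearrangement to turn $Q(k,l,p)$ into $P^*(k-l(l+1)/2,\,l,\,p-l)$. Applying (\ref{pstar}) then rewrites Theorem~\ref{simple} as
\[
 \sum_{k=0}^n\sum_{l=0}^m (-1)^l\,[q^{n-k-m+l}]\gaussian{m+p-l-1}{m-l}\cdot[q^{k-l(l+1)/2}]\gaussian{p}{l}=\delta_{n,0}\delta_{m,0}.
\]
Pulling the shifts out of the coefficient operators ($[q^{a-s}]X=[q^a]\,q^sX$) and then performing the sum over $k$ with the Cauchy-product rule $\sum_{k=0}^n a_{n-k}b_k=[q^n](\sum_k a_kq^k)(\sum_k b_kq^k)$ already invoked in the proof of Theorem~\ref{simple} (the two factors depend on $l$ but not on $k$), I expect to reach
\[
 [q^n]\sum_{l=0}^m (-1)^l q^{\,m-l+l(l+1)/2}\gaussian{m+p-l-1}{m-l}\gaussian{p}{l}=\delta_{n,0}\delta_{m,0}.
\]
Since the coefficient of every power $q^n$ is prescribed, the left-hand polynomial must equal $\delta_{m,0}$.

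The step that needs a little care is the final normalization: the exponent $m-l+l(l+1)/2$ is exactly $m+\binom{l}{2}$, so $q^m$ divides every term, and since $\delta_{m,0}=q^m\delta_{m,0}$ as well, dividing through by $q^m$ yields $\sum_{l=0}^m(-1)^l q^{\binom{l}{2}}\gaussian{m+p-l-1}{m-l}\gaussian{p}{l}=\delta_{m,0}$. Relabelling $m\to n$, $l\to k$, $p\to m+1$, and using the symmetry (\ref{symm}) to replace $\gaussian{m+n-k}{n-k}$ by $\gaussian{m+n-k}{m}$, then gives the statement. I do not anticipate a genuine obstacle: the argument is entirely the bookkeeping of the shifted $P^*$-arguments and of the accompanying powers of $q$, in the same spirit as the preceding theorems of this section; the only point worth flagging is the clean factorization of $q^m$ that makes the Gauss-binomial-type exponent $\binom{k}{2}$ appear.
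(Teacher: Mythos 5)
Your proposal is correct and follows essentially the same route as the paper's own proof: rewrite Theorem~\ref{simple} via (\ref{pnmpeq}) and (\ref{qnmpeq}) into $P^*$ values, pass to $q$-binomial coefficients with (\ref{pstar}), collapse the $k$-sum by the Cauchy product, and relabel $m\to n$, $l\to k$, $p\to m+1$. Your explicit remark that the exponent $m-l+l(l+1)/2$ equals $m+\binom{l}{2}$ and that the factor $q^m$ cancels against $\delta_{m,0}$ is exactly the step the paper performs silently.
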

\begin{proof}
From theorem \ref{simple} using (\ref{pnmpeq}) and (\ref{qnmpeq}):
\begin{equation}
\begin{split}
 & \sum_{k=0}^n\sum_{l=0}^m (-1)^l P^*(n-k-m+l,m-l,p-1) P^*(k-l(l+1)/2,l,p-l) \\
 = & \sum_{k=0}^n\sum_{l=0}^m (-1)^l [q^{n-k}] q^{m-l} \gaussian{m+p-l-1}{p-1} \cdot [q^k] q^{l(l+1)/2} \gaussian{p}{l} \\
 = & [q^n] \sum_{l=0}^m (-1)^l q^{m+l(l-1)/2} \gaussian{m+p-l-1}{p-1} \gaussian{p}{l}  = \delta_{n,0}\delta_{m,0} \\
\end{split}
\end{equation}
Replacing $m$ by $n$ and $l$ by $k$ and $p$ by $m+1$ gives the theorem.
\end{proof}
Taking $q=1$ and replacing $k$ by $n-k$ gives the combinatorial identity:
\begin{equation}
 \sum_{k=0}^n (-1)^k \binom{m+k}{m} \binom{m+1}{n-k} = \delta_{n,0}
\end{equation}
\begin{theorem}
\begin{equation}
 \sum_{k=0}^n\sum_{l=0}^{n-k} (-1)^k F(k+l) q^{\textstyle\binom{k}{2}} \gaussian{m+1}{k}\gaussian{m+l}{m} = F(0)
\end{equation}
\begin{equation}
 \sum_{k=0}^n\sum_{l=0}^{n-k} (-1)^l F(k+l) q^{\textstyle\binom{k}{2}} \gaussian{m+1}{k}\gaussian{m+l}{m} = F(0)
\end{equation}
\end{theorem}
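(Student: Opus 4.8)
The plan is to obtain both identities as immediate consequences of Theorem~\ref{delta}, after a single reindexing of the double sum. First I would introduce $j=k+l$ as a new summation variable in place of $l$. Since the pair $(k,l)$ runs over all nonnegative integers with $k+l\le n$, the pair $(k,j)$ runs over all integers with $0\le k\le j\le n$; under this change $F(k+l)$ becomes $F(j)$ and $\gaussian{m+l}{m}$ becomes $\gaussian{m+j-k}{m}$, so the first sum becomes
\begin{equation}
 \sum_{j=0}^n F(j)\sum_{k=0}^j (-1)^k q^{\textstyle\binom{k}{2}}\gaussian{m+j-k}{m}\gaussian{m+1}{k}.
\end{equation}

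Next I would apply Theorem~\ref{delta} with $n$ replaced by $j$: the inner sum equals $\delta_{j,0}$, so only the term $j=0$ survives and the double sum equals $F(0)$. For the second identity the single difference is the sign $(-1)^l$; under the same substitution $l=j-k$ this becomes $(-1)^{j-k}=(-1)^j(-1)^k$, so the inner sum is now $(-1)^j\sum_{k=0}^j(-1)^k q^{\binom{k}{2}}\gaussian{m+j-k}{m}\gaussian{m+1}{k}=(-1)^j\delta_{j,0}$ by Theorem~\ref{delta} again. Since $(-1)^j\delta_{j,0}=\delta_{j,0}$, once more only $j=0$ contributes and the sum is $F(0)$.

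There is no genuine obstacle here; the whole argument is bookkeeping. The only steps needing care are that the region $\sum_{k=0}^n\sum_{l=0}^{n-k}$ is exactly $\sum_{j=0}^n\sum_{k=0}^j$ after setting $l=j-k$ (this is in the same spirit as the remark, made earlier for the analogous double sums, that $k$ and $l$ may be interchanged and $l$ replaced by $n-k-l$), together with the trivial identities $(-1)^{j-k}=(-1)^j(-1)^k$ and $(-1)^j\delta_{j,0}=\delta_{j,0}$. I would also note in passing that, with $q$ replaced by $q^b$ in the Gaussian polynomials, these two identities specialize to (\ref{result7})--(\ref{result10}) upon taking $F(j)=q^{a\textstyle\binom{n-j}{2}}\gaussianpow{p+n-j}{p}{c}$ or $F(j)=q^{a\textstyle\binom{n-j}{2}}\gaussianpow{p}{n-j}{c}$, since then $F(0)$ is $q^{a\binom{n}{2}}\gaussianpow{p+n}{p}{c}$ or $q^{a\binom{n}{2}}\gaussianpow{p}{n}{c}$ respectively.
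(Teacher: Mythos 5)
Your proof is correct and is essentially the paper's own argument: the paper likewise decomposes the triangular region into the diagonals $k+l=c$ and applies Theorem~\ref{delta} with $n=c$ to kill every diagonal except $c=0$, your substitution $j=k+l$ being just an explicit form of that decomposition (including the observation that $(-1)^l$ contributes only a harmless factor $(-1)^j$ on each diagonal). No further comment is needed.
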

\begin{proof}
The double summation over $k$ and $l$ is over a triangle, and the sum over each diagonal $k+l=c$
is zero because of the previous theorem with $n=c$, except at the origin $k=l=c=0$,
where the summand is the right side of the identity.
\end{proof}
The following four identities are an application of this theorem.
\begin{equation}\label{resdbl1}
 \sum_{k=0}^n\sum_{l=0}^{n-k} (-1)^k 
  q^{a{\textstyle\binom{n-k-l}{2}}+b{\textstyle\binom{k}{2}}} \gaussianpow{p+n-k-l}{p}{c}\gaussianpow{m+1}{k}{b}\gaussianpow{m+l}{m}{b} 
  = q^{a\textstyle\binom{n}{2}}\gaussianpow{p+n}{p}{c}
\end{equation}
\begin{equation}\label{resdbl2}
 \sum_{k=0}^n\sum_{l=0}^{n-k} (-1)^l
  q^{a{\textstyle\binom{n-k-l}{2}}+b{\textstyle\binom{k}{2}}} \gaussianpow{p+n-k-l}{p}{c}\gaussianpow{m+1}{k}{b}\gaussianpow{m+l}{m}{b} 
  = q^{a\textstyle\binom{n}{2}}\gaussianpow{p+n}{p}{c}
\end{equation}
\begin{equation}\label{resdbl3}
 \sum_{k=0}^n\sum_{l=0}^{n-k} (-1)^k 
  q^{a{\textstyle\binom{n-k-l}{2}}+b{\textstyle\binom{k}{2}}} \gaussianpow{p}{n-k-l}{c}\gaussianpow{m+1}{k}{b}\gaussianpow{m+l}{m}{b} 
  = q^{a\textstyle\binom{n}{2}}\gaussianpow{p}{n}{c}
\end{equation}
\begin{equation}\label{resdbl4}
 \sum_{k=0}^n\sum_{l=0}^{n-k} (-1)^l 
  q^{a{\textstyle\binom{n-k-l}{2}}+b{\textstyle\binom{k}{2}}} \gaussianpow{p}{n-k-l}{c}\gaussianpow{m+1}{k}{b}\gaussianpow{m+l}{m}{b} 
  = q^{a\textstyle\binom{n}{2}}\gaussianpow{p}{n}{c}
\end{equation}
In the summands of the last four identities, because of the type of double summation,
$k$ and $l$ can be interchanged and $l$ can be replaced by $n-k-l$.
Some pairs of these identities or identities derived from them in this way
have identical summands if $(-1)^k$ is replaced by $(-1)^l$ and vice versa
and have identical right sides.
In these cases a linear combination of the two identities can be
taken such that in the summand of the new identity:
\begin{equation}\label{linsum1}
 \frac{1}{2} [(-1)^k+(-1)^l] =
\begin{cases}
 (-1)^k & $\text{if $k+l$ is even}$ \\
 0      & $\text{if $k+l$ is odd}$ \\
\end{cases}
\end{equation}
where the right side of the new identity is identical to
the right sides of the original identities, and:
\begin{equation}
 \frac{1}{2} [(-1)^k-(-1)^l] =
\begin{cases}
 (-1)^k & $\text{if $k+l$ is odd}$ \\
 0      & $\text{if $k+l$ is even}$ \\
\end{cases}
\end{equation}
where the right side of the new identity is zero.
For example taking identity \ref{resdbl2} with $a=0$, $b=c=1$ and $p=m$ and for the first identity
interchanging $k$ and $l$ and replacing $l$ by $n-k-l$,
and for the second identity additionally interchanging $k$ and $l$ again,
and taking linear combination (\ref{linsum1}) gives corollary 2.4 in \cite{merca1}:
\begin{equation}
 \dblsumeven (-1)^k q^{\textstyle\binom{n-k-l}{2}} \gaussian{m+k}{m}\gaussian{m+l}{m}\gaussian{m+1}{n-k-l}
 = \gaussian{m+n}{m}
\end{equation}
and taking identity \ref{resdbl3} with $a=b=c=1$ and $p=m+1$ and for the first identity replacing $l$ by $n-k-l$,
and for the second identity additionally interchanging $k$ and $l$, and taking linear combination
(\ref{linsum1}) gives corollary 3.4 in \cite{merca1}:
\begin{equation}
 \dblsumeven (-1)^k q^{{\textstyle\binom{k}{2}}+\textstyle\binom{l}{2}} \gaussian{m+1}{k}\gaussian{m+1}{l}\gaussian{m+n-k-l}{m} 
 = q^{\textstyle\binom{n}{2}} \gaussian{m+1}{n}
\end{equation}

\pdfbookmark[0]{References}{}


\begin{thebibliography}{99}
\bibitem{A84}
 G.E. Andrews,
 \textit{The Theory of Partitions},
 Cambridge University Press, 1984.
\bibitem{AE04}
 G.E. Andrews, K. Eriksson,
 \textit{Integer Partitions},
 Cambridge University Press, 2004.
\bibitem{AAR}
 G.E. Andrews, R. Askey, R. Roy,
 \textit{Special Functions},
 Cambridge University Press, 1999.
\bibitem{G72}
  H.W. Gould,
  \textit{Combinatorial Identities}, rev. ed.,
  Morgantown, 1972.
\bibitem{guo}
 V.J.W. Guo, D.-M. Yang,
 A q-analogue of some binomial coefficient identities of Y. Sun,
 \textit{Electron. J. Combin.} 18~(2011)~P78
\bibitem{MK}
  M.J. Kronenburg,
  Computation of P(n,m), the Number of Integer Partitions of n
  into Exactly m Parts,
  \href{https://arxiv.org/abs/2205.04988}{{\tt arXiv:2205.04988}}{\tt~[math.NT]}
\bibitem{MK2}
  M.J. Kronenburg,
  Computation of q-Binomial Coefficients with the P(n,m) Integer Partition Function,
  \href{https://arxiv.org/abs/2205.15013}{{\tt arXiv:2205.15013}}{\tt~[math.CO]}
\bibitem{merca1}
 M. Merca,
 Generalizations of two identities of Guo and Yang,
 \textit{Quaest. Math.} 41~(2018)~643-652.
\bibitem{WPPP}
E.W. Weisstein, \textit{q-Binomial Coefficient}.
From Mathworld - A Wolfram Web Resource.
\href{https://mathworld.wolfram.com/q-BinomialCoefficient.html}
{{\tt https://mathworld.wolfram.com/q-BinomialCoefficient.html}}
\bibitem{wiki1}
 Wikipedia, \textit{Gaussian binomial coefficient},\\
 \href{https://en.wikipedia.org/wiki/Gaussian\_binomial\_coefficient}
 {{\tt https://en.wikipedia.org/wiki/Gaussian\_binomial\_coefficient}}
\end{thebibliography}
\end{document}